\numberwithin{equation}{section}
\newtheorem{theorem}{Theorem}
\newtheorem{lemma}{Lemma}
\theoremstyle{definition}
\newtheorem{rem}{Remark}
\newtheorem{assumption}{Assumption}
\newcommand{\dd}{\mathrm d}
\newcommand{\ee}{\mathrm e}
\newcommand{\cC}{\mathcal C}
\newcommand{\cD}{\mathcal D}
\newcommand{\cP}{\mathcal P}
\newcommand{\bP}{\mathbb P}
\newcommand{\cF}{\mathcal F}
\newcommand{\cL}{\mathcal L}
\newcommand{\IR}{\mathbb R}
\newcommand{\IN}{\mathbb N}
\newcommand{\vp}{\varphi}
\newcommand{\HS}{\mathrm{HS}}
\newcommand{\QQ}{B}
\DeclareMathOperator{\Span}{span}
\DeclareMathOperator{\dom}{dom}
\DeclareMathOperator{\Tr}{Tr}
\title[MLEI for a fractional order equation]
{Mittag-Leffler Euler integrator for a 
stochastic fractional order equation 
with additive noise}
\author[M.~Kov\'acs]{Mih\'aly Kov\'acs}
\address{
  Department of Mathematical Sciences,
  Chalmers University of Technology and University 
  of Gothenburg,
  SE--412 96 Gothenburg,
  Sweden and 
  P\'azm\'any P\'eter Catholic University, 
  Faculty of Information Technology an Bionics, P.O. Box 278, H-1444 Budapest, Hungary}
\email{mihaly@chalmers.se}
\author[S.~Larsson]{Stig Larsson}
\address{
  Department of Mathematical Sciences, 
  Chalmers University of Technology and University 
  of Gothenburg,
  SE--412 96 Gothenburg,
  Sweden}
\email{stig@chalmers.se}
\author[F.~Saedpanah]{Fardin Saedpanah}
\address{
  Department of Mathematics,
  University of Kurdistan,
  P.O. Box 416, Sanandaj,
  Iran}
\email{f.saedpanah@uok.ac.ir, fardin.saedpanah@gmail.com}
\thanks{Research supported in part by 
Nordforsk, project number 74756.}
\keywords{Euler integrator, fractional equations, Riesz kernel, strong
  convergence, integro-differential equations, stochastic differential
  equations.}
\subjclass{34A08, 45D05, 45K05, 60H15, 60H35, 65M12, 65M60}
\begin{document}

\begin{abstract}
  Motivated by fractional derivative models in viscoelasticity, a
  class of semilinear stochastic Volterra integro-differential
  equations, and their deterministic counterparts, are considered.  A
  generalized exponential Euler method, named here as the
  Mittag-Leffler Euler integrator, is used for the temporal
  discretization, while the spatial discretization is performed by the
  spectral Galerkin method.  The temporal rate of strong convergence
  is found to be (almost) twice compared to when the backward Euler
  method is used together with a convolution quadrature for time
  discretization.  Numerical experiments that validate the theory are
  presented.
\end{abstract}

\date{\today} 
  
\maketitle

\section{Introduction}  \label{sec:intro}

We study the numerical approximation of a class of semilinear
Volterra integro-differential equations in a real, separable,
infinite-dimensional Hilbert space $H$ of the form
\begin{equation} \label{VolterraEq}
  \dd u(t)+\int_0^t b(t-s)Au(s)\, \dd s \, \dd t
  =F(u(t))\, \dd t+\dd W(t),
    \, t\in (0,T];\,u(0)=u_0, 
\end{equation}
where $A$ is a self-adjoint, positive definite, not necessarily
bounded, operator on the Hilbert space $H$, $W$ is an $H$-valued
Wiener process with covariance operator $Q$, $F\colon H\to H$ is a
nonlinear operator, $b$ is a locally integrable scalar kernel, and
$u_0$ is an $H$-valued random variable. Our main example of $b$ is the
Riesz kernel
\begin{equation} \label{RieszKernel}
 b(t)=\frac{t^{\alpha-1}}{\Gamma(\alpha)}, 
  \quad 0<\alpha<1,  
\end{equation} 
where 
$\Gamma(\alpha)=\int_0^\infty t^{\alpha-1}\ee^{-t}\,\dd t$ 
is the gamma function. 

By introducing the fractional integral of order $\alpha$ denoted by
$J_0^{\alpha}$, see, for example, \cite{PodlubnyBook1999}, as
\begin{equation*}
  (J_0^{\alpha}g)(t)=\frac{1}{\Gamma(\alpha)}
    \int_0^t (t-s)^{\alpha-1} g(s)\, \dd s,
     \quad 0<\alpha<1,
\end{equation*}
equation \eqref{VolterraEq} becomes a fractional order equation of the form
\begin{equation} \label{Volterrafrac}
  \dd u(t)+J_0^{\alpha}(Au)(t) \, \dd t
  =F(u(t))\, \dd t+\dd W(t) ,
  \  t\in (0,T];\quad u(0)=u_0. 
\end{equation}

We note that the present framework applies also to slightly more 
general kernels, which have similar smoothing effects, 
such as the tempered Riesz kernel
\begin{equation} \label{TemperedRieszKernel}
  b(t)=\frac{1}{\Gamma(\alpha)}t^{\alpha-1} \ee^{-\eta t},
  \quad 0<\alpha<1,\ \eta\geq 0,
\end{equation}
and even to certain kernels with finite smoothness, see
Remark~\ref{rem:S4} for further discussion. 
Recalling that
  $0<\alpha<1$ in \eqref{RieszKernel}, we denote henceforth
\begin{align} \label{def:rho}
 \rho=\alpha+1, \quad 1<\rho<2,
\end{align}
so that $b(t)=t^{\rho-2}/\Gamma(\rho-1)$.  

We motivate our main example \eqref{Volterrafrac} by a model from
linear viscoelasticity, for more examples see, e.g.,
\cite{Mainardi,PrussBook1993} and references therein. In one spatial
dimension, considering the class of viscoelastic materials which
exhibit a simple power-law creep, the evolution equation that
describes the response variable $w$ (chosen among the field variables:
displacement, stress, the strain or the particle velocity) is given by
\begin{equation*}
 w(x,t)=w(x,0+)+tw_t(x,0+)+ \frac{c}{\Gamma(\rho)}
\int_0^t (t-s)^{\rho-1} w_{xx}(x,s) \,\dd s,
  \quad 1<\rho<2,
\end{equation*}
see, for example, \cite{Mainardi}. Assuming that 
$w_t(x,0+)=0$ and that $w$ is continuous at $t=0+$ 
with $w(x,0+)=w_0(x)$, one arrives at the Cauchy problem 
\begin{equation*}
 w_t(x,t)= \frac{c}{\Gamma(\rho-1)}\int_0^t
   (t-s)^{\rho-2}w_{xx}(x,s)\,\dd s;
   \quad w(x,0)=w_0(x).
\end{equation*}
With $\alpha=\rho-1$ and $c=1$ we get 
\begin{equation*}
 w_t(x,t)=J^\alpha_0(w_{xx}(x,\cdot))(t); \quad w(x,0)=w_0(x).
\end{equation*}
Now, if $w$ is chosen to be the particle velocity, and $f$ represents
a nonlinear, external viscous force, which is perturbed by Gaussian
noise $\dot{\xi}$, then the equation for the particle velocity reads
as
\begin{equation}\label{eq:frw}
 w_t(x,t)=J^\alpha_0(w_{xx}(x,\cdot))(t)+f(w(x,t))
  +\dot{\xi}(x,t);\quad w(x,0)=w_0(x).
\end{equation}
Considering the equation on an interval $[0,L]$ and supplementing the
equation with non-slip boundary conditions, we arrive at a special
instance of \eqref{Volterrafrac}, with $H=L_2(0,L)$, $A$ being the
Dirichlet Laplacian in $H$ and $F$ the Nemytskij operator
$F(v)(\cdot)=f(v(\cdot))$.  We remark that, without the noise and $f$,
equation \eqref{eq:frw} is often referred to as a fractional wave
equation.

We note that when the kernel $b$ in \eqref{VolterraEq} is smooth, 
e.g., exponential kernels, these equations reveal a 
hyperbolic behaviour, whereas for weakly singular 
kernels, e.g., the Riesz kernel \eqref{RieszKernel}, 
they exhibit certain parabolic features. 

The literature on numerical methods for 
stochastic PDEs, such as stochastic parabolic 
and hyperbolic PDEs, is mature.
In some works, by using exponential integrators 
\cite{HochbruckOstermann}, the strong rate of 
convergence has been improved for the stochastic 
heat equation, see, e.g., 
\cite{BeckerJentzenKloeden,LordTambue,WangQi}, 
and for the stochastic wave equation, see, e.g., 
\cite{AntonCohenLarssonWang}, and the 
references therein. 
The drawback of the exponential integrators for 
stochastic PDEs is that, the eigenfunctions of the 
operator $A$ and of the covariance operator $Q$ 
of the noise must coincide and must be known 
explicitly, so that the scheme can be implemented. 

However, the literature on numerical analysis of stochastic Volterra
equations is more scarce, containing only
\cite{AnderssonKovacsLarsson,KovacsPrintemsStrong,KovacsPrintemsWeak},
and recently a few papers specifically for the fractional stochastic
heat equation (where there is a derivative in front of $J_0^{\alpha}$
in \eqref{Volterrafrac}), based on a convolution quadrature, see, for
example, \cite{Gunz,Gunz1}.

Here, we study a full discretization of \eqref{VolterraEq}, as well as
its deterministic counterpart, i.e., the special case when $W=0$.  We
use a generalized exponential Euler method, named here as the
Mittag-Leffler Euler integrator, for the temporal discretization. Full
discretization is then formulated by the spectral Galerkin method for
spatial discretization.

As is the case for stochastic equations with no memory effects, the
time integration is based on the mild formulation of the
equation. However, there is a major difference, namely, the solution
operator in our case do not have the semigroup property and hence the
integrator uses the approximate solution from all previous time
levels, not just from the current one, in order to advance to the next
time level. This phenomenon is of course present in the convolution
quadrature setting as well and makes the error analysis more difficult
compared to the memoryless case.

The main novelty in this work is the introduction of a new temporal
discretization method for \eqref{VolterraEq} and its error
analysis. The analysis of the spatial discretisation is more or less
standard.  In particular, we prove that the strong rate of temporal
convergence, is (almost) twice the rate of the Euler method combined
with Lubich's convolution quadrature of order 1,
\cite{AnderssonKovacsLarsson}. As a consequence, for trace-class
noise, we recover (almost) the optimal rate 1 in time.

When $H=L_2(\cD)$, where $\cD\subset \mathbb{R}^d$ is a bounded
domain, with appropriately smooth boundary, the framework presented
here allows for a general class of Nemytskij operators $F$ when
$d=1,2,3$, with some restriction on $\rho$ when $d=3$.  For
space-time white noise we must have $d=1$, while for coloured noise
$d>1$ is allowed.

The outline of the paper is as follows.  In Section~\ref{2}, 
we introduce notation, the abstract framework, state our main 
assumptions and present some preliminary results on the 
solution of \eqref{VolterraEq}. 
In Section~\ref{Sec3}, we introduce the numerical
scheme \eqref{UmN} and, in Theorem~\ref{thm:main}, we state 
and prove our main result on the order of strong convergence. 
In Section~\ref{Sec:Numeric}, we discuss the implementation of 
the scheme and present some numerical experiments to illustrate 
the theory. 
Throughout the paper $C$ denotes a generic constant that 
may have different values at different occurrences, but 
its value is independent of the discretization parameters. 

\section{Preliminaries} \label{2}
\subsection{The abstract setting}
Let $H$ be a real, separable, infinite-dimensional Hilbert space with
inner product $(\cdot,\cdot)$ and norm $\|\cdot\|$ and $A$ be a
self-adjoint, positive definite, not necessarily bounded operator in
$H$ with compact inverse.  An important example is $H=L_2(\cD)$ and
$A=-\Delta$ with homogeneous Dirichlet boundary conditions.  Let
$\{(\lambda_k,\vp_k)\}_{k=1}^\infty$ be the eigenpairs of $A$, i.e.,
\begin{equation} \label{EigenEq}
 A\vp_k=\lambda_k \vp_k,\quad k \in \IN.
\end{equation}
It is known that
$ 0<\lambda_1\leq \lambda_2\leq \dots \leq \lambda_k \leq \dots$
with $\lim_{k\to\infty}\lambda_k=\infty$ and the eigenvectors
$\{\vp_k\}_{k=1}^\infty$ form an orthonormal basis for $H$.  We
introduce the fractional order spaces
\begin{equation*}
 \dot H^\nu
 :=\dom(A^{\frac{\nu}{2}}), \quad
 \|v\|_\nu^2 :=\|A^{\frac{\nu}{2}} v\|^2
 =\sum_{k=1}^\infty \lambda_k^\nu (v,\vp_k)^2,\quad 
 \nu\in \IR,\ v \in \dot H^\nu.
\end{equation*}
 
Let $\cL=\cL(H)$ denote the space of all bounded 
linear operators on $H$. We also consider the space 
of Hilbert--Schmidt operators, that is, the space 
of all operators $T\in\cL$ for which 
\begin{equation*} 
 \|T\|_{\HS}^2 
  = \sum_{k=1}^\infty \|T\vp_k\|^2 < \infty. 
\end{equation*}

Let $(\Omega,\cF,(\cF_t)_{t\in [0,T]},\bP)$ be a filtered probability
space, $(\cF_t)_{t\in [0,T]}$ being a normal filtration, with Bochner
spaces $L_p(\Omega;H)=L_p\big((\Omega,\cF,\bP);H\big), \ p\geq 2$.  We
let $Q\in \cL$ be a self-adjoint, positive semidefinite operator and
$H_0=Q^{\frac{1}{2}}(H)$ be the Hilbert space with the inner product
$\langle u,v \rangle_{H_0}=\langle
Q^{-\frac{1}{2}}u,Q^{-\frac{1}{2}}v\rangle$, where $Q^{-\frac{1}{2}}$
denotes the pseudoinverse of $Q^{\frac{1}{2}}$, when it is not
injective, and $Q^{\frac{1}{2}}$ is the unique positive semidefinite
square root of $Q$.  By $\cL_2^0=\cL_2^0(H)$ we denote the space of
Hilbert--Schmidt operators $H_0\to H$. Thus,
$\| T\|_{\cL_2^0}=\| T Q^{\frac{1}{2}}\|_{\HS}<\infty$, for
$T\in \cL_2^0$.  Then we let $W$ be $Q$-Wiener process in $H$ with
respect to $(\Omega,\cF,(\cF_t)_{t\in [0,T]},\bP)$.  We recall the
It\^o isometry,
\begin{equation} \label{ItoIsometry}
  \Big\| \int_0^t \phi(s)\, \dd W(s)
   \Big\|_{L_2(\Omega;H)}
  =
\Big\|\Big(\int_0^t \big\|\phi(s)
   \big\|_{\cL_2^0}^2\, \dd s\Big)^{\frac{1}{2}}
    \Big\|_{L_2(\Omega;\IR)},
\end{equation}
and the Burkholder--Davis--Gundy inequality, 
for $p\geq 2$,   
\begin{equation} \label{BDG}
  \Big\| \int_0^t \phi(s)\, \dd W(s)
   \Big\|_{L_p(\Omega;H)}  
  \leq C_p\Big\|\Big(\int_0^t \big\|\phi(s)
   \big\|_{\cL_2^0}^2\, \dd s\Big)^{\frac{1}{2}}
    \Big\|_{L_p(\Omega;\IR)}, 
\end{equation}
for strongly measurable functions 
$\phi\colon[0,T]\to \cL_2^0$, \cite{DaPratoZabczyk}.

We recall from \eqref{def:rho} that $ \rho=\alpha+1$,
  $\rho\in (1,2),\ \alpha\in (0,1)$. 

\begin{assumption} \label{Assumption_W}
We quantify the regularity of the noise by 
$\beta\in(0,\frac{1}{\rho}]$ through the assumption 
that there is a constant $\QQ$ such that
\begin{equation} \label{NoiseRegularity}
  \big\| A^{\frac{\beta-\frac{1}{\rho}}{2}} 
   \big\|_{\cL_2^0}  
  =\big\| A^{\frac{\beta-\frac{1}{\rho}}{2}} 
   Q^{\frac{1}{2}}\big\|_{\HS}  \le\QQ.
\end{equation}
\end{assumption}
Trace class noise; that is, when 
$\Tr(Q)=\|Q^{\frac{1}{2}}\|_{\HS}^2<\infty$,
corresponds to $\beta=\frac{1}{\rho}$. 
When $A=-\Delta$ is the Dirichlet Laplacian, we may 
take $Q=A^{-s}$ with $s\geq 0$. Then 
\eqref{NoiseRegularity} is satisfied with 
$\beta<s+\frac{1}{\rho}-\frac{d}{2}$, because 
$\lambda_j \approx j^{\frac{2}{d}}$ as $j\to \infty$. 
We note that $s=0$ corresponds to 
space-time white noise $Q=I$, and in this case, $d=1$ and $\beta<\frac{1}{\rho}-\frac{1}{2}$. 

\subsection{The linear deterministic problem}
We assume that there exists a strongly continuous family
$\{S(t)\}_{t\ge 0}$ of bounded linear operators on $H$ such that the
function $u(t)=S(t)u_0$, $u_0\in H$, is the unique solution of
\begin{equation*}
u(t)+A\int_0^tB(t-s)u(s)\,\dd s=u_0,\quad t\ge 0,  
\end{equation*}
with $B(t)=\int_0^tb(s)\,\dd s$. When $t\to u(t)=S(t)u_0$ is differentiable for $t>0$, then $u$ is the unique solution of \begin{equation*}
\dot{u}(t)+A\int_0^tb(t-s)u(s)\,\dd s=0,\ t>0;\quad u(0)=u_0.  
\end{equation*}
We refer to the monograph \cite{PrussBook1993} for a comprehensive
theory of resolvent families for Volterra equations.  An important
feature of the resolvent family $\{S(t)\}_{t\ge 0}$ is that it does
not have the semigroup property; that is, $S(t+s)\neq S(t)S(s)$. This
is the mathematical reflection of the fact that the solution possesses
a nontrivial memory.  In our special setting, using the spectral
decomposition of $A$, an explicit representation of $S(t)$ is given by
the Fourier series
\begin{equation} \label{SDecomposition}
  S(t)v=\sum_{k=1}^\infty s_k(t) (v,\vp_k)\vp_k,
\end{equation}
where the functions $s_k(t)$ are the solutions of 
\begin{equation} \label{sk}
  \dot s_k(t)+\lambda_k \int_0^t b(t-s)s_k(s) \, \dd s=0,
  \ t>0;\quad s_k(0)=1.
\end{equation}

Next, we collect our precise assumptions on the resolvent family
$\{S(t)\}_{t\geq 0}$.

\begin{assumption} \label{Assumption_S} We assume that the resolvent
  family $\{S(t)\}_{t\ge0}$ is strongly continuously differentiable
  for $t>0$ and enjoys the following smoothing properties: There is
  $M$ such that for $t>0$, we have
\begin{align}  
  &\|A^s S(t)\|_{\cL  } \leq M t^{-s\rho},
    \quad &&s\in\Big[0,\frac{1}{\rho}\Big],
     \label{S:1}\\
  &\|A^s\dot S(t)\|_{\cL  } 
    \leq M t^{-s\rho-1},
     \quad &&s\in\Big[0,\frac{1}{\rho}\Big],
      \label{S:2}\\
  &\|A^{-s}\dot S(t)\|_{\cL  } 
    \leq M t^{s\rho-1},
     \quad &&s\in[0,1].
      \label{S:3}
\end{align}
\end{assumption}

\begin{rem}
  \label{rem:S4}
These are verified in 
\cite[Theorem 5.5]{McLeanThomee} for the Riesz kernel and in 
\cite[Lemma A.4]{BaeumerGeissertKovacs} for more general 
kernels.  We note that for the Riesz kernel 
\eqref{RieszKernel}, which is our main example, 
estimates \eqref{S:1} and \eqref{S:2} hold also for 
$s\in [0,1]$, see \cite[Theorem 5.5]{McLeanThomee}, 
but we do not need this extended range of $s$ for the 
present analysis.  A more general class of kernels $b$ 
for which \eqref{S:1}--\eqref{S:3} are satisfied
is the the class of $4$-monotone kernels such that 
$0\neq b\in L_{1,\mathrm{loc}}(\mathbb{R}_+)$,  
$\lim_{t\to \infty}b(t)=0$, with 
\begin{equation*}
  \rho  := 1 + \frac{2}{\pi}
  \sup \{ | \mathrm{arg} \, \widehat{b}(z) | : \mathrm{Re}\,z >0 \} \in (1,2), 
\end{equation*} 
and $\widehat{b}(z)\le C z^{1-\rho}$ for $z>1$, where
this latter condition may be substituted by the condition
$\|b\|_{L_1(0,t)}\le Ct^{\rho-1}$, $t\in (0,1)$, see \cite[Remark 3.8
and Lemma A.4]{BaeumerGeissertKovacs}.  In particular, $b$ does not
have to be analytic.  (Here $\widehat{b}$ denotes the Laplace transform
of $b$.)
\end{rem}

\subsection{Well-posedness of the semilinear stochastic problem}
The mild solution of the semilinear stochastic equation
\eqref{VolterraEq} is an adapted $H$-valued stochastic process,
$u(t)$, such that, for $t\in[0,T]$, $\bP$-almost surely,
\begin{equation} \label{VarConstForm} 
  u(t)=S(t)u_0 
   + \int_0^t  S(t-s)F(u(s))\, \dd s 
   +\int_0^t S(t-s)\, \dd W(s).
\end{equation}

\begin{assumption} \label{Assumption_F}
In addition to the singularity exponent 
$\rho=\alpha+1\in(1,2)$ from
\eqref{RieszKernel} and the regularity parameter
$\beta\in (0,\frac{1}{\rho}]$ in \eqref{NoiseRegularity}, 
we assume that there are $\delta\in [1,\frac{2}{\rho})$, 
$\gamma\in [0,\beta)$, $\eta \in [1,\frac{2}{\rho})$, 
and a constant $L>0$, such that
\begin{align}  
  &\|F(u)\|\leq L(1+\|u\|),\quad 
  \|F'(u)v\|\leq L\|v\|,  &&u,v\in H, 
   \label{F:1}\\
  &\|F'(u)v\|_{-\delta} 
    \leq L(1+\|u\|_{\gamma})\|v\|_{-\gamma},
     &&u\in \dot H^{\gamma},
     \ v\in \dot H^{-\gamma}, 
    \label{F:2}\\
  &\|F''(u)(v_1,v_2)\|_{-\eta} 
    \leq L\|v_1\|\|v_2\|,  &&v_1,v_2\in H.  
   \label{F:3}
\end{align}
\end{assumption}

Our main example is $H=L_2(\cD)$ with $\cD\subset \IR^d$ a bounded
domain with appropriately smooth boundary, and $A=-\Delta $, the
negative of the Dirichlet Laplacian.  Here $F$ can be taken to be a
Nemytskij operator defined by $F(u)(x)=f(u(x))$, where
$f\colon\IR\to\IR$ is a smooth function with bounded derivatives of
orders 1 and 2.  Then \eqref{F:1} clearly holds and \eqref{F:3} is
satisfied with $\eta>d/2$ because of Sobolev's inequality.  The
additional assumption $\eta <\frac{2}{\rho}$ puts a restriction on
$\rho$, namely, $1<\rho<4/d$.  For \eqref{F:2} we refer to Lemma~4.4
in \cite{WangGanTang}, which can be extended from $d=1$ to $d\le3$,
again in case $\delta>d/2$ and thus $1<\rho<4/d$.

\begin{lemma} \label{SolutionRegularityLemma} 
Suppose that Assumption \ref{Assumption_W}, \eqref{S:1} 
from Assumption \ref{Assumption_S}, and \eqref{F:1} 
from Assumption \ref{Assumption_F} hold. 
Let $p\geq2$, and assume 
$\|u_0\|_{L_p(\Omega;\dot H^\gamma)}\le K$. 
Then, there is a unique mild solution 
$u\in \cC([0,T]; L_p(\Omega;H))$ of \eqref{VarConstForm}. 
Furthermore, for a constant 
$C=C(\QQ,K,L,M,T,\beta,\gamma,\rho,p)$, 
\begin{equation} \label{SolutionRegularity}
 \sup_{t\in [0,T]}\|u(t)\|_{L_p(\Omega;
  \dot H^\gamma)}
 \leq C . 
\end{equation}
\end{lemma}

\begin{proof}
The existence and uniqueness of a mild solution
$u\in \cC([0,T]; L_p(\Omega;H))$ of \eqref{VarConstForm} 
can be proved, even only under assumption~\eqref{F:1}, 
via a standard Banach fixed point argument using 
\eqref{NoiseRegularity} and \eqref{S:1}, 
see, for example, the proof of 
\cite[Theorem 3.3]{BaeumerGeissertKovacs}.
Therefore,
\begin{equation} \label{SolutionRegularityGamma0}
  \|u(t)\|_{L_p(\Omega;H)}
   \leq C,    \quad t\in [0,T],
\end{equation}
which is \eqref{SolutionRegularity} with 
$\gamma=0$. 
For $\gamma\in (0,\beta)$, using 
\eqref{VarConstForm}, we have
\begin{equation*} 
 \begin{split}
  \|u(t)\|_{L_p(\Omega;\dot H^\gamma)}
   &\leq \|S(t)\|_{\cL  }\|u_0\|_{L_p(\Omega;
     \dot H^\gamma)} \\
   &\quad +\int_0^t \|A^{\frac{\gamma}{2}}
     S(t-s)\|_{\cL  } 
      \|F(u(s))\|_{L_p(\Omega;H)}\, \dd s \\
   &\quad +\Big\|\int_0^t A^{\frac{\gamma}{2}}S(t-s)\, 
        \dd W(s)\Big\|_{L_p(\Omega;H)}. 
 \end{split}
\end{equation*}
By using \eqref{S:1} with $s=0$, \eqref{S:1}, \eqref{F:1},  
\eqref{BDG}, and \eqref{SolutionRegularityGamma0},  
we obtain
\begin{equation*} 
 \begin{split}
  \|u(t)\|_{L_p(\Omega;\dot H^\gamma)}
   &\leq \|u_0\|_{L_p(\Omega;\dot H^\gamma)} 
    +L\int_0^t \|A^{\frac{\gamma}{2}}S(t-s)\|_{\cL}
     \big(1+\|u(s)\|_{L_p(\Omega;H)}\big)
      \, \dd s \\
   &\quad +C_p\Big\|\Big(\int_0^t 
     \|A^{\frac{\gamma}{2}}S(t-s)
      Q^{\frac{1}{2}}\|_{\HS}^2\, \dd s
       \Big)^{\frac{1}{2}}\Big\|_{L_p(\Omega;\IR)}\\
   &\leq C+C \int_0^t (t-s)^{-\frac{\gamma\rho}{2}}\, 
     \dd s \\
   &\quad +C_p
  \big\|A^{\frac{\beta-\frac{1}{\rho}}{2}}  
   Q^{\frac{1}{2}}\big\|_{\HS} \Big(\int_0^t 
    \Big\| A^{\frac{(\gamma-\beta) 
     +\frac{1}{\rho}}{2}} S(t-s)\Big\|^2\, \dd s
    \Big)^{\frac{1}{2}}.  
 \end{split}
\end{equation*}
By using \eqref{NoiseRegularity} and \eqref{S:1} again, 
we have 
\begin{equation*} 
 \begin{split}
  \|u(t)\|_{L_p(\Omega;\dot H^\gamma)}
  \leq C +BC
   \Big(\int_0^t 
      (t-s)^{-1+(\beta-\gamma)\rho}\, \dd s
       \Big)^{\frac{1}{2}}, 
 \end{split}
\end{equation*}
where the integral is finite, since 
$(\beta-\gamma)\rho\in (0,1)$.
This completes the proof.
\end{proof}

\begin{rem} \label{DeterministicSolutionRegularity}
In the deterministic case, i.e., when $W=0$, 
by following the proof of 
Lemma~\ref{SolutionRegularityLemma}, it is straightforward 
to prove that, assuming 
$u_0\in \dot H^{2\gamma}$ for some 
$\gamma\in [0,\frac{1}{\rho})$,  
we have the regularity estimate 
\begin{equation} 
 \label{DeterministicSolutionRegularityEst}
   \sup_{t\in [0,T]}\|u(t)\|_{\dot H^{2\gamma}}
    \leq C.
\end{equation}
\end{rem}

  \section{Full discretization} \label{Sec3}
  In this section we formulate a fully discrete method for
  approximation of \eqref{VolterraEq}. We use the spectral Galerkin
  method for spatial discretization in combination with a time
  discretization based on an exponential Euler type method.  We refer
  to the proposed time discretization method as the Mittag-Leffler
  Euler integrator (MLEI), since the solution operator can be
  represented using the Mittag-Leffler function, in case the
  convolution kernel $b$ is the Riesz kernel as in our main example
  \eqref{Volterrafrac}.  We give more details in
  Section~\ref{Sec:Numeric}, where numerical examples are presented.

Let $0=t_0<t_1< \dots <t_M=T$ be a uniform partition of 
the time interval $[0,T]$, with time step 
$\Delta t=t_{m+1}-t_m,\ m=0,1,\dots,M-1$. 
Then, for $m=0,1,\dots,M$, 
by using the variation of constants formula 
\eqref{VarConstForm}, we have
\begin{equation} \label{um}
 \begin{split}
  u(t_m)&=S(t_m)u_0+\int_0^{t_m}
   S(t_m-\sigma)F(u(\sigma))
    \, \dd\sigma+\int_0^{t_m} S(t_m-\sigma)\, 
     \dd W(\sigma)\\
  &=S(t_m)u_0 + \sum_{j=0}^{m-1}
   \int_{t_{j}}^{t_{j+1}}S(t_m-\sigma)F(u(\sigma))
    \, \dd\sigma+\int_0^{t_m}S(t_m-\sigma)
     \, \dd W(\sigma),
 \end{split}
\end{equation}
Following the idea of exponential integrators, we 
formulate the MLEI as
\begin{equation} \label{Um}
 \begin{split}
  U_m&=S(t_m)u_0 + \sum_{j=0}^{m-1}
  \int_{t_{j}}^{t_{j+1}} S(t_m-\sigma)\, \dd \sigma 
   \, F(U_j) +\int_0^{t_m} S(t_m-\sigma)
    \, \dd W(\sigma),
 \end{split}
\end{equation}
where $U_j\approx u(t_j)$, $j=0,1,...,M$, and where the convolution containing 
the nonlinear term is approximated but the linear terms, 
including the stochastic convolution integral, 
are computed exactly, see 
Section \ref{Sec:Numeric} for details.

For spatial discretization, we define finite-dimensional 
subspaces $H_N$ of $H$ by 
$H_N=\Span\{\vp_1,\vp_2,\dots,\vp_N\}$, where 
$\{\vp_k\}_{k=1}^{\infty}$ are the eigenvectors of $A$,
\eqref{EigenEq}.  Then we define the projector
\begin{equation} \label{PN}
 \cP_N\colon H\to H_N,\quad 
  \cP_N v=\sum_{k=1}^N ( v,\vp_k)
   \vp_k,\quad v\in H. 
\end{equation}
We also define the operator 
\begin{equation} \label{AN}
 A_N\colon H_N \to H_N,\quad A_N=A \cP_N, 
\end{equation}
which generates a family of resolvent operators 
$\{S_N(t)\}_{t\geq 0}$ in $H_N$. 
It is known that 
\begin{align}
 &S_N(t)\cP_N=S(t)\cP_N,\label{SNPN}\\
 &\|A^{-\nu}(I-\cP_N)\|
  =\sup_{k\geq N+1}\lambda_k^{-\nu}
  =\lambda_{N+1}^{-\nu},\quad \nu \geq 0.  
  \label{PN:est}
\end{align}

The representation of $S_N$, similar to 
\eqref{SDecomposition}, is given by 
\begin{equation*} 
  S_N(t)v=\sum_{k=1}^N s_{k}(t) (v,\vp_k)\vp_k. 
\end{equation*}
Therefore, the smoothing properties  
\eqref{S:1}--\eqref{S:3} also hold for $S_N$ with constants
  independent of $N$.  

Hence, the fully discrete approximation of 
\eqref{VolterraEq}, based on the temporal approximation
\eqref{Um}, is given by 
\begin{equation} \label{UmN}
 \begin{split}
   U_m^N=S_N(t_m)\cP_N u_0 
    &+ \sum_{j=0}^{m-1}
     \int_{t_{j}}^{t_{j+1}} S_N(t_m-\sigma)
      \, \dd \sigma 
       \, \cP_N F(U_j^N) \\
    &+\int_0^{t_m} S_N(t_m-\sigma) \cP_N
       \, \dd W(\sigma),
 \end{split}
\end{equation}
with initial value $U_0^N = \cP_N u_0$.  
Now we state and prove the main theorem, that shows 
the strong rate of convergence. 

\begin{theorem} \label{thm:main} Suppose that
  Assumption~\ref{Assumption_W}, Assumption~\ref{Assumption_S}, and
  Assumption~\ref{Assumption_F} hold and
  $\|u_0\|_{L_4(\Omega;\dot H^\gamma)}\le K$.  Then, for a constant
  $C=C(\QQ,K,L,T,\beta,\rho,\gamma)$, we have
\begin{equation*} 
\sup_{t_m\in[0,T]} \|u(t_m)-U_m^N\|_{L_2(\Omega; H)}
 \leq C \big(\lambda_N^{-\frac{\gamma}{2}}
      +\Delta t^{\gamma\rho}\big).
\end{equation*}
\end{theorem}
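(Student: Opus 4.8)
The plan is to split the total error into a spatial part, which produces the factor $\lambda_N^{-\gamma/2}$, and a temporal part, which produces $\Delta t^{\gamma\rho}$, and then to close the argument with a discrete Gronwall inequality. First I would use the identity $S_N(t)\cP_N=S(t)\cP_N$ from \eqref{SNPN} to rewrite the scheme \eqref{UmN} entirely in terms of $S$ and $\cP_N$, and subtract it from the mild representation \eqref{VarConstForm} at $t=t_m$. The initial and stochastic terms cancel up to the projection, leaving $S(t_m)(I-\cP_N)u_0$ and $\int_0^{t_m}S(t_m-\sigma)(I-\cP_N)\,\dd W(\sigma)$; since these involve no time stepping I bound them directly. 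For the first I insert $A^{\pm\gamma/2}$ and combine \eqref{S:1} with $\|A^{-\gamma/2}(I-\cP_N)\|=\lambda_{N+1}^{-\gamma/2}$ from \eqref{PN:est}. For the second I apply the It\^o isometry \eqref{ItoIsometry}, factor $Q^{1/2}=A^{-(\beta-1/\rho)/2}\cdot A^{(\beta-1/\rho)/2}Q^{1/2}$, use \eqref{NoiseRegularity} and \eqref{S:1}, and check that the resulting time integral converges because $(\beta-\gamma)\rho>0$; both contributions are $O(\lambda_N^{-\gamma/2})$.

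The remaining difference comes from the drift sum. I would insert the intermediate values $F(u(t_j))$ and $\cP_NF(u(t_j))$ to split the integrand of each step into three pieces: a temporal consistency term $S(t_m-\sigma)[F(u(\sigma))-F(u(t_j))]$, a spatial term $S(t_m-\sigma)(I-\cP_N)F(u(t_j))$, and a feedback term $S(t_m-\sigma)\cP_N[F(u(t_j))-F(U_j^N)]$. The spatial term is handled as before, bounding $\|F(u(t_j))\|$ through \eqref{F:1} and Lemma~\ref{SolutionRegularityLemma}, using \eqref{S:1} with exponent $\gamma/2$ and \eqref{PN:est}; the integrable singularity $(t_m-\sigma)^{-\gamma\rho/2}$ gives another $O(\lambda_N^{-\gamma/2})$. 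The feedback term is bounded by $C\Delta t\sum_{j<m}\|e_j^N\|_{L_2(\Omega;H)}$, where $e_j^N=u(t_j)-U_j^N$, using the Lipschitz bound in \eqref{F:1} and $\|S(t_m-\sigma)\cP_N\|_{\cL}\le M$; this is the term that discrete Gronwall will absorb.

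The heart of the proof is the temporal consistency term $E_m=\sum_j\int_{t_j}^{t_{j+1}}S(t_m-\sigma)[F(u(\sigma))-F(u(t_j))]\,\dd\sigma$, which must be shown to be $O(\Delta t^{\gamma\rho})$. A naive bound via the Lipschitz continuity of $F$ and the time-H\"older regularity of $u$ is suboptimal, so I would instead expand $F$ to second order, $F(u(\sigma))-F(u(t_j))=F'(u(t_j))(u(\sigma)-u(t_j))+R_j(\sigma)$. The quadratic remainder is measured in $\dot H^{-\eta}$ via \eqref{F:3}, giving $\|S(t_m-\sigma)R_j(\sigma)\|\le M(t_m-\sigma)^{-\eta\rho/2}\,\tfrac L2\|u(\sigma)-u(t_j)\|^2$; here $\eta/2<1/\rho$ keeps \eqref{S:1} applicable, and the $L_4$ moments of the increment (controlled through Lemma~\ref{SolutionRegularityLemma}, which is exactly why the hypothesis is stated in $L_4(\Omega;\dot H^\gamma)$) make this term of order $\Delta t^{\beta\rho}$, higher than needed. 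For the linear term I substitute the mild representation of the increment $u(\sigma)-u(t_j)$, splitting it into an initial-data part $[S(\sigma)-S(t_j)]u_0$, a drift part, and a noise part, each further split into a local contribution on $[t_j,\sigma]$ and a nonlocal ``tail'' on $[0,t_j]$. The uniform tool is \eqref{F:2}, which lets me move $F'(u(t_j))$ into $\dot H^{-\delta}$ at the cost of measuring the increment in the weaker norm $\dot H^{-\gamma}$, paired with $\|A^{\delta/2}S(t_m-\sigma)\|_{\cL}\le M(t_m-\sigma)^{-\delta\rho/2}$ (using $\delta/2<1/\rho$); the gain of $\gamma\rho$ worth of smoothing in the negative norm is what upgrades the rate.

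I expect the main obstacle to be the increment terms near $t=0$. Measuring $[S(\sigma)-S(t_j)]u_0$ in $\dot H^{-\gamma}$ and using \eqref{S:3} with $s=\gamma\in[0,1]$ produces the singular factor $r^{\gamma\rho-1}$, whose integral over the very first subinterval is precisely $O(\Delta t^{\gamma\rho})$; the same mechanism, applied through \eqref{S:2} and \eqref{S:3} to the nonlocal noise tail $\int_0^{t_j}[S(\sigma-r)-S(t_j-r)]\,\dd W(r)$, is the genuinely delicate estimate and is where the rate $\Delta t^{\gamma\rho}$ is pinned down, while the local and drift pieces turn out to be of higher order. The recurring bookkeeping challenge is to keep every fractional exponent inside its admissible range, which is exactly what the standing assumptions $\delta,\eta\in[1,\tfrac2\rho)$, $\gamma\in[0,\beta)$, and $\beta\le\tfrac1\rho$ guarantee. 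Collecting the spatial contributions $O(\lambda_N^{-\gamma/2})$, the consistency contribution $O(\Delta t^{\gamma\rho})$, and the feedback term $C\Delta t\sum_{j<m}\|e_j^N\|_{L_2(\Omega;H)}$, a discrete Gronwall inequality then yields the stated bound.
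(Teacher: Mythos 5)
Your proposal follows the paper's own proof in every structural respect: the same error splitting (initial-data, drift and noise spatial terms bounded via \eqref{S:1}, \eqref{PN:est}, the It\^o isometry \eqref{ItoIsometry} and \eqref{NoiseRegularity}; a feedback term absorbed by discrete Gronwall), the same second-order Taylor expansion of $F$ with the remainder measured in $\dot H^{-\eta}$ via \eqref{F:3}, and the same substitution of the mild representation \eqref{VarConstForm} into the linear term, splitting the increment $u(\sigma)-u(t_j)$ into initial-data, drift and noise parts with local and tail contributions, handled through \eqref{F:2} paired with $\|A^{\delta/2}S(t_m-\sigma)\|_{\cL}\le M(t_m-\sigma)^{-\delta\rho/2}$ and through \eqref{S:3}. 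Your identification of the initial-data increment and the nonlocal noise tail as the terms that pin down the rate $\Delta t^{\gamma\rho}$ is exactly where the paper's estimates \eqref{I32} and \eqref{I36} land. However, one step fails as you state it: the claim that the \emph{local} noise piece, $\sum_{j=0}^{m-1}\int_{t_j}^{t_{j+1}}S(t_m-\sigma)\cP_N F'(u(t_j))\int_{t_j}^{\sigma}S(\sigma-\tau)\,\dd W(\tau)\,\dd\sigma$, ``turns out to be of higher order'' is not obtainable by the plain triangle-inequality argument your sketch implies. Bounding each summand separately via the It\^o isometry and \eqref{S:1} gives $\|\int_{t_j}^{\sigma}S(\sigma-\tau)\,\dd W(\tau)\|_{L_2(\Omega;H)}\le C(\sigma-t_j)^{\beta\rho/2}$, and summing the $m\le T/\Delta t$ terms yields only $O(\Delta t^{\beta\rho/2})$, which falls short of the target $\Delta t^{\gamma\rho}$ for every $\gamma\in(\beta/2,\beta)$ --- and the theorem must cover all $\gamma<\beta$.

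The missing idea is the martingale-orthogonality step the paper uses for $I_{3,5}$: since each stochastic integral over $[t_j,\sigma]\subset[t_j,t_{j+1}]$ has conditional expectation zero given $\cF_{t_j}$ while the earlier summands are $\cF_{t_j}$-measurable, the cross terms vanish in expectation, so the squared $L_2(\Omega;H)$-norm of the sum equals the sum of the squared norms; combined with the Cauchy--Schwarz inequality in $\sigma$ on each subinterval this gains the crucial extra half order, giving $I_{3,5}\le C\Delta t^{(1+\beta\rho)/2}$, which dominates $\Delta t^{\gamma\rho}$ because $2\gamma\rho<2\beta\rho\le 1+\beta\rho$. Without this step your Gronwall closure only proves the theorem for $\gamma\le\beta/2$. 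A second, more minor inaccuracy: you assert the Taylor remainder contributes $O(\Delta t^{\beta\rho})$, but the available temporal H\"older estimate (cited in the paper from \cite{AnderssonKovacsLarsson}, Proposition 3.2) is $\|u(\sigma)-u(t_j)\|_{L_4(\Omega;H)}\le C(\sigma-t_j)^{\gamma\rho/2}$ for $\gamma<\beta$, not the endpoint exponent; squaring gives exactly $O(\Delta t^{\gamma\rho})$, which suffices, so this does not damage the proof, but the endpoint claim is not justified.
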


\begin{proof}
By subtracting \eqref{UmN} from \eqref{um}, 
we get
\begin{equation*} 
 \begin{split}
  u(t_m)-U_m^N&=S(t_m)u_0-S_N(t_m)\cP_N u_0\\
   &\quad +\sum_{j=0}^{m-1}
      \int_{t_{j}}^{t_{j+1}}
      \Big\{S(t_m-\sigma)F(u(\sigma))
    -S_N(t_m-\sigma)\cP_N F(U_j^N)\Big\}\,\dd\sigma\\
   &\quad +\int_{0}^{t_m} 
    \big\{S(t_m-\sigma)-S_N(t_m-\sigma)\cP_N\big\} 
     \  \dd W(\sigma).
 \end{split}
\end{equation*}
By recalling \eqref{SNPN} and taking norms, we obtain
\begin{equation} \label{I:s}
 \begin{split}
  &\|u(t_m)-U_m^N\|_{L_2(\Omega; H)}
\leq\|S(t_m)(I-\cP_N) u_0\|_{L_2(\Omega; H)}\\
   &\qquad\quad +\Big\|
      \int_{0}^{t_{m}}
       S(t_m-\sigma)(I-\cP_N)F(u(\sigma))
        \,\dd\sigma\Big\|_{L_2(\Omega; H)}\\
   &\qquad\quad +\Big\|\sum_{j=0}^{m-1}
      \int_{t_{j}}^{t_{j+1}}
       S(t_m-\sigma)\cP_N \big(
        F(u(\sigma))-F(U_j^N)\big)\,\dd\sigma
         \Big\|_{L_2(\Omega; H)}\\
   &\qquad\quad +\Big\|\int_{0}^{t_m} 
     S(t_m-\sigma)(I-\cP_N)\,\dd W(\sigma)
      \Big\|_{L_2(\Omega; H)}
 =\sum_{l=1}^4 I_l.
 \end{split}
\end{equation}
We note that $I_1,I_2$, and $I_4$ correspond to 
the \emph{spatial discretization error}, while $I_3$ 
corresponds to the \emph{temporal error}. 

1. {\bf Spatial error}: The estimate of $I_1$ is a  
consequence of \eqref{S:1} with $s=0$ and \eqref{PN:est}, as
\begin{equation} \label{I1}
  \begin{split}
  I_1
  &\leq \|S(t_m)\|_{\cL  }
   \|A^{-\frac{\gamma}{2}}(I-\cP_N) 
    A^{\frac{\gamma}{2}}u_0\|_{L_2(\Omega; H)}
\\ &  \leq C \lambda_{N+1}^{-\frac{\gamma}{2}}
    \|u_0\|_{L_2(\Omega;\dot H^\gamma)}
\leq C \lambda_{N+1}^{-\frac{\gamma}{2}}   .
  \end{split}
\end{equation}

For $I_2$, by using \eqref{S:1} 
and \eqref{PN:est}, 
we have
\begin{equation} \label{I2}
 \begin{split}
  I_2
 &\leq \int_{0}^{t_{m}}
   \|A^{\gamma}S(t_m-\sigma)\|_{\cL  }\\
 &\qquad \times\|A^{-\gamma}(I-\cP_N)
   \|_{\cL  }
    \|F(u(\sigma))\|_{L_2(\Omega; H)}\,\dd \sigma\\
 &\leq C \int_{0}^{t_{m}}
   (t_m-\sigma)^{-\gamma\rho}
    \lambda_{N+1}^{-\gamma}
    \|F(u(\sigma))\|_{L_2(\Omega; H)}\,\dd \sigma
   \leq C \lambda_{N+1}^{-\gamma}, 
 \end{split}
\end{equation}
where we recall that $\gamma\rho<1$ and use \eqref{F:1} and
\eqref{SolutionRegularity} with $p=2, \gamma=0$.

Now we estimate $I_4$. 
Using the It\^o isometry \eqref{ItoIsometry}, 
we have
\begin{equation*} 
 \begin{split}
  I_4
  &\leq  \Big\|\Big(\int_{0}^{t_m} 
     \big\|S(t_m-\sigma)(I-\cP_N)Q^{\frac{1}{2}}
      \big\|_{\HS}^2
      \,\dd \sigma \Big)^{\frac{1}{2}}
      \Big\| \\ 
  &\leq \big\|A^{\frac{\beta-\frac{1}{\rho}}{2}} 
    Q^{\frac{1}{2}}\big\|_{\HS}
     \|A^{-\frac{\gamma}{2}}(I-\cP_N)\|_{\cL  }
 \Big(\int_{0}^{t_m} 
     \big\|A^{\frac{(\gamma-\beta)
      +\frac{1}{\rho}}{2}}
       S(t_m-\sigma)\big\|_{\HS}^2
       \,\dd \sigma \Big)^{\frac{1}{2}},
 \end{split}
\end{equation*}
which, by \eqref{S:1}, \eqref{PN:est}, and since
$(\beta-\gamma)\rho\in (0,1)$, implies
\begin{align} \label{I4}
  \begin{split}
I_4
 &\leq C \lambda_{N+1}^{-\frac{\gamma}{2}} 
  \big\|A^{\frac{\beta-\frac{1}{\rho}}{2}}
  Q^{\frac{1}{2}}\big\|_{\HS}
   \Big(\int_{0}^{t_m} 
    (t_m-\sigma)^{-1+(\beta-\gamma)\rho}
      \,\dd \sigma \Big)^{\frac{1}{2}}
\\ & 
\leq C \lambda_{N+1}^{-\frac{\gamma}{2}} .  
  \end{split}
\end{align}

2. {\bf Temporal error}: 
Here we estimate $I_3$, i.e.,
\begin{equation*}
  I_3
   = \Big\|\sum_{j=0}^{m-1}
      \int_{t_{j}}^{t_{j+1}}
       S(t_m-\sigma)\cP_N \big(
        F(u(\sigma))-F(U_j^N)\big)\,\dd\sigma
         \Big\|_{L_2(\Omega; H)}.
\end{equation*}
We use the Taylor expansion 
\begin{equation*} 
  F(u(\sigma))=F(u(t_j))
   +F'(u(t_j))\big(u(\sigma)-u(t_j)\big)
    +R_{F,j}(\sigma)
\end{equation*}
where the remainder is
\begin{equation*} 
  R_{F,j}(\sigma)=\int_0^1 F''\Big(u(t_j)
     +\gamma \big(u(\sigma)-u(t_j)\big)\Big)
      \big(u(\sigma)-u(t_j),u(\sigma)-u(t_j)\big)
       (1-\gamma)\,\dd \gamma,
\end{equation*}
to get
\begin{equation*} 
 \begin{split}
  I_3
    &\leq \Big\|\sum_{j=0}^{m-1}
      \int_{t_{j}}^{t_{j+1}}S(t_m-\sigma)\cP_N 
       \big(F(u(t_j))-F(U_j^N)\big)
        \,\dd\sigma\Big\|_{L_2(\Omega; H)}\\
    &\quad +\Big\|\sum_{j=0}^{m-1}
      \int_{t_{j}}^{t_{j+1}}S(t_m-\sigma)\cP_N 
       F'(u(t_j))\big(u(\sigma)-u(t_j)\big)
        \,\dd\sigma\Big\|_{L_2(\Omega; H)}\\
    &\quad +\Big\|\sum_{j=0}^{m-1}
      \int_{t_{j}}^{t_{j+1}}S(t_m-\sigma)
       \cP_N R_{F,j}(\sigma) 
        \,\dd\sigma\Big\|_{L_2(\Omega; H)}.   
 \end{split}
\end{equation*}
By substituting $u(\sigma)$ and $u(t_j)$ from the variation 
of constants formula \eqref{VarConstForm} in the second term, 
we have
\begin{equation} \label{I3:s}
 I_3 \le \sum_{l=1}^7 I_{3,l}, 
\end{equation}
where
\begin{align*}
 I_{3,1}= 
\Big\|\sum_{j=0}^{m-1}
\int_{t_{j}}^{t_{j+1}}S(t_m-\sigma)\cP_N 
\big(F(u(t_j))-F(U_j^N)\big) 
\,\dd\sigma\Big\|_{L_2(\Omega; H)}  , 
\end{align*}
\begin{align*}
 I_{3,2}= 
\Big\|\sum_{j=0}^{m-1}
\int_{t_{j}}^{t_{j+1}}S(t_m-\sigma)\cP_N 
F'(u(t_j))\big(S(\sigma)-S(t_j)\big)u_0
\, \dd\sigma\Big\|_{L_2(\Omega; H)}, 
\end{align*}
\begin{align*}
 I_{3,3}= 
\Big\|\sum_{j=0}^{m-1}
\int_{t_{j}}^{t_{j+1}}S(t_m-\sigma)\cP_N  F'(u(t_j))
\int_{t_j}^{\sigma} 
S(\sigma-\tau) F(u(\tau)) \,\dd\tau
\,\dd\sigma\Big\|_{L_2(\Omega; H)}, 
\end{align*}
\begin{align*}
 I_{3,4}&= 
\Big\|\sum_{j=0}^{m-1}
      \int_{t_{j}}^{t_{j+1}}S(t_m-\sigma)\cP_N 
       F'(u(t_j))
\\ & \quad 
\times\int_0^{t_j} 
        \big(S(\sigma-\tau)-S(t_j-\tau)\big) 
         F(u(\tau)) \,\dd\tau
          \,\dd\sigma\Big\|_{L_2(\Omega; H)},
\end{align*}
\begin{align*}
 I_{3,5}= 
\Big\|\sum_{j=0}^{m-1}
      \int_{t_{j}}^{t_{j+1}}S(t_m-\sigma)\cP_N 
       F'(u(t_j)) 
\int_{t_j}^{\sigma} 
        S(\sigma-\tau) \,\dd W(\tau)
        \,\dd\sigma\Big\|_{L_2(\Omega; H)}, 
\end{align*}
\begin{align*}
 I_{3,6}&= 
\Big\|\sum_{j=0}^{m-1}
      \int_{t_{j}}^{t_{j+1}}S(t_m-\sigma)\cP_N 
       F'(u(t_j))
\\ & \quad \times \int_0^{t_j} 
        \big(S(\sigma-\tau)-S(t_j-\tau)\big) 
         \,\dd W(\tau)
          \,\dd\sigma\Big\|_{L_2(\Omega; H)}, 
\end{align*}
and 
\begin{align*}
 I_{3,7}= \Big\|\sum_{j=0}^{m-1}
  \int_{t_{j}}^{t_{j+1}}S(t_m-\sigma)
   \cP_N R_{F,j}(\sigma)\,\dd\sigma\Big\|_{L_2(\Omega; H)}. 
\end{align*}

First, using \eqref{F:1} and 
\eqref{S:1} with $s=0$, we have
\begin{equation} \label{I31}
  I_{3,1}
  \leq ML \Delta t\sum_{j=0}^{m-1} 
    \|u(t_j)-U_j^N\|_{L_2(\Omega;H)}.
\end{equation}
To estimate $I_{3,2}$, we have 
\begin{equation*} 
 \begin{split}
  I_{3,2}
 &\leq \sum_{j=0}^{m-1}\int_{t_j}^{t_{j+1}} 
   \|A^{\frac{\delta}{2}} S(t_m-\sigma)\|_{\cL  }
    \|A^{-\frac{\delta}{2}}F'(u(t_j))
     \big(S(\sigma)-S(t_j)\big)u_0\|_{L_2(\Omega;H)}
      \,\dd\sigma\\
 &= \sum_{j=0}^{m-1}\int_{t_j}^{t_{j+1}} 
   \|A^{\frac{\delta}{2}} S(t_m-\sigma)\|_{\cL  }
    \Big\|A^{-\frac{\delta}{2}}F'(u(t_j))
     \int_{t_j}^\sigma \dot S(\tau)u_0\,\dd \tau
      \Big\|_{L_2(\Omega;H)}\,\dd\sigma, 
 \end{split}
\end{equation*}
so that, using \eqref{F:2}, \eqref{S:1}, and 
\eqref{SolutionRegularity} with $p=4$, we obtain
\begin{equation*} 
 \begin{split}
  I_{3,2} 
 &\leq C 
\sum_{j=0}^{m-1}\int_{t_j}^{t_{j+1}}
  (t_m-\sigma)^{-\frac{\delta\rho}{2}}\big(1+\|u(t_j)
  \|_{L_4(\Omega;\dot{H}^\gamma)}\big)
      \\ & \quad \times
     \Big\|\Big\|\int_{t_j}^\sigma \dot S(\tau)u_0
      \,\dd \tau\Big\|_{-\gamma}
       \Big\|_{L_{4}(\Omega;\IR)}\,\dd\sigma \\
 &\leq C
 \sum_{j=0}^{m-1}\int_{t_j}^{t_{j+1}}
        (t_m-\sigma)^{-\frac{\delta\rho}{2}}
    \Big\|\int_{t_j}^\sigma \big\|A^{-\gamma}
     \dot S(\tau)A^{\frac{\gamma}{2}}u_0
      \,\dd \tau\big\|
       \Big\|_{L_{4}(\Omega;\IR)}\,\dd\sigma.
 \end{split}
\end{equation*}
Now, by \eqref{S:3}, we have
\begin{equation*} 
 \begin{split}
  I_{3,2}
  &\leq C\|u_0\|_{L_{4}(\Omega;\dot H^{\gamma})}
      \sum_{j=0}^{m-1}\int_{t_j}^{t_{j+1}}
         (t_m-\sigma)^{-\frac{\delta\rho}{2}}
\int_{t_j}^\sigma \tau^{\gamma\rho-1}
       \,\dd \tau \,\dd\sigma \\
  &\leq \frac{C}{\gamma\rho}
       (t_{j+1}^{\gamma\rho}-t_j^{\gamma\rho}) 
        \sum_{j=0}^{m-1}\int_{t_j}^{t_{j+1}}
         (t_m-\sigma)^{-\frac{\delta\rho}{2}} 
          \,\dd\sigma
\leq C 
       (t_{j+1}^{\gamma\rho}-t_j^{\gamma\rho}),
 \end{split}
\end{equation*} 
and, since $\gamma\rho\in (0,1)$, we 
consequently have
\begin{equation} \label{I32}
 \begin{split}
  I_{3,2}  \leq C  \Delta t^{\gamma\rho}.
 \end{split}
\end{equation}

Now we estimate $I_{3,3}$ in \eqref{I3:s}. 
Using \eqref{F:1} and \eqref{S:1} with $s=0$, we have
\begin{equation*} 
 \begin{split}
  I_{3,3}
 &\leq \sum_{j=0}^{m-1}\int_{t_j}^{t_{j+1}} 
   \|S(t_m-\sigma)\|_{\cL  }
    \Big\|F'(u(t_j))
     \int_{t_j}^{\sigma} 
       S(\sigma-\tau) F(u(\tau)) \,\dd\tau\ 
      \Big\|_{L_2(\Omega;H)}
      \,\dd\sigma\\
 &\leq L\sum_{j=0}^{m-1}\int_{t_j}^{t_{j+1}} 
   \|S(t_m-\sigma)\|_{\cL  }
    \int_{t_j}^{\sigma} 
     \|S(\sigma-\tau)\|_{\cL  } 
      \|F(u(\tau))\big\|_{L_2(\Omega;H)} \,\dd\tau
       \,\dd\sigma \\
 &\leq C\sum_{j=0}^{m-1}\int_{t_j}^{t_{j+1}} 
   \int_{t_j}^{\sigma} \big(1+\|u(\tau)
    \|_{L_2(\Omega;H)}\big)\,\dd\tau \,\dd\sigma,
 \end{split}
\end{equation*}
that, by \eqref{SolutionRegularity} with 
$p=2, \gamma=0$, implies 
\begin{equation} \label{I33}
  I_{3,3}  \leq  C\Delta t.
\end{equation}

To estimate $I_{3,4}$ in \eqref{I3:s}, we have
\begin{equation*}
 \begin{split}
  I_{3,4}
 &\leq \sum_{j=0}^{m-1}\int_{t_j}^{t_{j+1}} 
   \|A^{\frac{\delta}{2}} S(t_m-\sigma)\|_{\cL  } \\
 &\quad\times \Big\|A^{-\frac{\delta}{2}}F'(u(t_j))
     \int_0^{t_j}\big(S(\sigma-\tau)-S(t_j-\tau)\big) 
      F(u(\tau)) \,\dd\tau\ \Big\|_{L_2(\Omega;H)}
       \,\dd\sigma\\
 &= \sum_{j=0}^{m-1}\int_{t_j}^{t_{j+1}} 
   \|A^{\frac{\delta}{2}} S(t_m-\sigma)\|_{\cL  } \\
 &\quad\times\Big\|\Big\|A^{-\frac{\delta}{2}}
   F'(u(t_j))
    \int_0^{t_j} \int_{t_j}^\sigma 
     \dot S(\theta-\tau)\,\dd \theta \ 
      F(u(\tau))\,\dd\tau\ \Big\|
       \Big\|_{L_2(\Omega;\IR)}
        \,\dd\sigma,  
 \end{split}
\end{equation*}
which, in view of \eqref{S:1}, \eqref{F:2}, and
\eqref{SolutionRegularity} with $p=2$, implies
\begin{equation*} 
 \begin{split}
  I_{3,4}
&\leq C 
      \sum_{j=0}^{m-1}\int_{t_j}^{t_{j+1}} 
       (t_m-\sigma)^{-\frac{\delta\rho}{2}}\\
  &\quad\times\Big\|\Big\|
      \int_0^{t_j} \int_{t_j}^\sigma 
       A^{-\frac{\gamma}{2}}
       \dot S(\theta-\tau)\,\dd \theta \ 
        F(u(\tau))\,\dd\tau\, \Big\|
        \Big\|_{L_{4}(\Omega;\IR)}
         \,\dd\sigma,  \\
  &\leq C 
      \sum_{j=0}^{m-1}\int_{t_j}^{t_{j+1}} 
       (t_m-\sigma)^{-\frac{\delta\rho}{2}}  \\
  &\quad\times \int_0^{t_j} \int_{t_j}^\sigma 
      \big\|A^{-\frac{\gamma}{2}}
       \dot S(\theta-\tau)\big\|_{\cL  }\,\dd\theta\ 
        \big\|F(u(\tau))\big\|_{L_{4}(\Omega;H)}
         \,\dd\tau\,\dd\sigma.
 \end{split}
\end{equation*}
Now, by \eqref{S:3} and \eqref{F:1}, we have 
\begin{equation*} 
 \begin{split}
  I_{3,4}
  &\leq C 
      \sum_{j=0}^{m-1}\int_{t_j}^{t_{j+1}} 
       (t_m-\sigma)^{-\frac{\delta\rho}{2}}  \\
  &\quad\times \int_0^{t_j} \int_{t_j}^\sigma 
     (\theta-\tau)^{\frac{\gamma\rho}{2}-1}
      \,\dd\theta\ \big(1+\|u(\tau)
       \|_{L_{4}(\Omega;H)}\big)
         \,\dd\tau\,\dd\sigma , 
 \end{split}
\end{equation*}
which, together with \eqref{SolutionRegularity} 
with $p=4, \gamma=0$,  
implies
\begin{equation*} 
 I_{3,4}
  \leq C 
     \sum_{j=0}^{m-1}\int_{t_j}^{t_{j+1}} 
      (t_m-\sigma)^{-\frac{\delta\rho}{2}}  
   \int_0^{t_j} \int_{t_j}^\sigma 
     (\theta-\tau)^{\frac{\gamma\rho}{2}-1}
      \,\dd\theta\ \,\dd\tau\,\dd\sigma . 
\end{equation*}
Then, computing the double integral as 
\begin{equation*} 
 \begin{split}
  \int_0^{t_j} \int_{t_j}^\sigma 
   (\theta-\tau)^{\frac{\gamma\rho}{2}-1}
    \,\dd\theta\,\dd\tau
  &=\int_{t_j}^\sigma \int_0^{t_j}
    (\theta-\tau)^{\frac{\gamma\rho}{2}-1}
     \,\dd\tau\,\dd\theta\\
  &=\frac{2}{\gamma\rho}
     \int_{t_j}^\sigma \big(
      \theta^{\frac{\gamma\rho}{2}}
       -(\theta-t_j)^{\frac{\gamma\rho}{2}}\big) 
        \,\dd\theta 
  \leq \frac{2}{\gamma\rho} 
    t_j^{\frac{\gamma\rho}{2}} \Delta t,
 \end{split}
\end{equation*}
due to $\frac{\gamma\rho}{2}\in (0,\frac{1}{2})$,
we conclude the estimate
\begin{equation} \label{I34}
  I_{3,4}
  \leq C \Delta t.
\end{equation}

We now estimate the terms in \eqref{I3:s}, which are 
affected by the noise. 
For $I_{3,5}$, using the fact that the expected value 
of independent processes is zero, and then the
Cauchy--Schwarz inequality, we have
\begin{equation*} 
 \begin{split}
  I_{3,5}^2
   &=\mathbb{E}\Big\|\sum_{j=0}^{m-1}
      \int_{t_{j}}^{t_{j+1}}S(t_m-\sigma)\cP_N 
       F'(u(t_j))\int_{t_j}^{\sigma} 
        S(\sigma-\tau) \,\dd W(\tau)
        \,\dd\sigma\Big\|^2\\
   &=\sum_{j=0}^{m-1} \mathbb{E}\Big\|
      \int_{t_{j}}^{t_{j+1}}\int_{t_j}^{\sigma}
       S(t_m-\sigma)\cP_N F'(u(t_j)) 
        S(\sigma-\tau) \,\dd W(\tau)
        \,\dd\sigma\Big\|^2\\
   &\leq \Delta t \sum_{j=0}^{m-1}
     \int_{t_{j}}^{t_{j+1}}\mathbb{E}\Big\|
      \int_{t_j}^{\sigma}
       S(t_m-\sigma)\cP_N F'(u(t_j)) 
        S(\sigma-\tau) \,\dd W(\tau)
        \Big\|^2\,\dd\sigma.
 \end{split}
\end{equation*}
Then, by the It\^o isometry \eqref{ItoIsometry}, 
\eqref{F:1} and \eqref{SolutionRegularity} 
with $p=2, \gamma=0$, we have
\begin{equation*} 
 \begin{split}
  I_{3,5}^2&\leq \Delta t \sum_{j=0}^{m-1}
     \int_{t_{j}}^{t_{j+1}}\int_{t_j}^{\sigma}
      \Big\| S(t_m-\sigma)\cP_N F'(u(t_j)) 
        S(\sigma-\tau) Q^{\frac{1}{2}}
        \Big\|_{\HS}^2 \,\dd \tau\,\dd\sigma  \\
   &\leq C \Delta t
     \big\|A^{\frac{\beta-\frac{1}{\rho}}{2}}
       Q^{\frac{1}{2}}\big\|_{\HS}^2
     \sum_{j=0}^{m-1}
      \int_{t_{j}}^{t_{j+1}}\int_{t_j}^{\sigma}
       \Big\| S(t_m-\sigma)\Big\|_{\cL  }^2
        \Big\|A^{\frac{-\beta+\frac{1}{\rho}}{2}} 
         S(\sigma-\tau) \Big\|_{\cL  }^2 
         \,\dd \tau\,\dd\sigma.
 \end{split}
\end{equation*}
Now, using \eqref{NoiseRegularity} and
\eqref{S:1}, we obtain
\begin{equation*} 
  I_{3,5}^2
  \leq C\Delta t
     \sum_{j=0}^{m-1}
      \int_{t_{j}}^{t_{j+1}}\int_{t_j}^{\sigma}
       (\sigma-\tau)^{\beta\rho-1} 
        \,\dd \tau\,\dd\sigma 
  \leq C
       \Delta t^{1+\beta\rho},
\end{equation*}
and therefore, we conclude the estimate
\begin{equation} \label{I35}
 I_{3,5}  \leq C      \Delta t^{\frac{1+\beta\rho}{2}}.
\end{equation}

Now we estimate $I_{3,6}$. 
To this end, having
\begin{equation*} 
 \begin{split}
  I_{3,6}
  &\leq \sum_{j=0}^{m-1}\int_{t_j}^{t_{j+1}} 
    \|A^{\frac{\delta}{2}}S(t_m-\sigma)\|_{\cL  }\\
  &\qquad  \times\Big\|A^{-\frac{\delta}{2}} 
      F'(u(t_j))\int_0^{t_j} 
       \big(S(\sigma-\tau)-S(t_j-\tau)\big)
        \,\dd W(\tau) \Big\|_{L_2(\Omega;H)}
       \,\dd\sigma\\
  &= \sum_{j=0}^{m-1}\int_{t_j}^{t_{j+1}} 
    \|A^{\frac{\delta}{2}}S(t_m-\sigma)\|_{\cL  }\\
  &\qquad  \times\Big\|\big\|A^{-\frac{\delta}{2}} 
    F'(u(t_j))\int_0^{t_j}\int_{t_j}^\sigma 
     \dot S(\theta-\tau)\,\dd \theta\,\dd W(\tau) 
      \big\|\Big\|_{L_2(\Omega;H)}\,\dd\sigma,
 \end{split}
\end{equation*}
and using \eqref{S:1} and \eqref{F:2}, we obtain
\begin{equation*} 
 \begin{split}
  I_{3,6}
  &\leq C \big(1+\sup_{t\in[0,T]}  
  \|u(t)\|_{L_{4}(\Omega;  \dot H^{\gamma})}\big)\\
  &\quad \times\sum_{j=0}^{m-1}\int_{t_j}^{t_{j+1}} 
    (t_m-\sigma)^{-\frac{\delta\rho}{2}} 
     \Big\|\int_0^{t_j}\int_{t_j}^\sigma 
      A^{-\frac{\gamma}{2}}\dot S(\theta-\tau)
      \,\dd \theta\,\dd W(\tau) 
       \Big\|_{L_{4}(\Omega;H)}\,\dd\sigma, 
 \end{split}
\end{equation*}
Then, by \eqref{SolutionRegularity} with $p=4$ and the
Burkholder--Davis--Gundy inequality \eqref{BDG},
\begin{equation*}
 \begin{split}
  I_{3,6}
  &\leq C 
  \sum_{j=0}^{m-1}\int_{t_j}^{t_{j+1}} 
    (t_m-\sigma)^{-\frac{\delta\rho}{2}} 
\\  &\quad \times 
     \Big\|\bigg(\int_0^{t_j}\big\|\int_{t_j}^\sigma 
      A^{-\frac{\gamma}{2}}\dot S(\theta-\tau)
       \,\dd \theta\ Q^{\frac{1}{2}}\big\|_{\HS}^2
        \,\dd \tau \Big)^{\frac{1}{2}} 
         \Big\|_{L_{4}(\Omega;\IR)}\,\dd\sigma \\
  &\leq C 
     \big\|A^{\frac{\beta-\frac{1}{\rho}}{2}}
      Q^{\frac{1}{2}}\big\|_{\HS} 
       \sum_{j=0}^{m-1}\int_{t_j}^{t_{j+1}} 
        (t_m-\sigma)^{-\frac{\delta\rho}{2}}  \\
  &\quad \times 
     \Big\|\bigg(\int_0^{t_j}\Big(\int_{t_j}^\sigma 
      \|A^{\frac{-(\gamma+\beta)+\frac{1}{\rho}}{2}}
       \dot S(\theta-\tau)\|_{\cL  }
        \,\dd \theta\ \Big)^2
         \,\dd \tau \Big)^{\frac{1}{2}} 
         \Big\|, 
 \end{split}
\end{equation*}
which, using \eqref{NoiseRegularity} and \eqref{S:3}, implies
\begin{equation*} 
  I_{3,6}
 \leq C 
      \sum_{j=0}^{m-1}\int_{t_j}^{t_{j+1}} 
       (t_m-\sigma)^{-\frac{\delta\rho}{2}}
    \Big\|\bigg(\int_0^{t_j}\Big(\int_{t_j}^\sigma 
    (\theta-\tau)^{\frac{(\gamma+\beta)\rho}{2} 
     -\frac{3}{2}}
      \,\dd \theta\ \Big)^2
       \,\dd \tau \Big)^{\frac{1}{2}} 
        \Big\|. 
\end{equation*}
From this and
\begin{equation*}
 \begin{split}
  \int_0^{t_j}&\Big(\int_{t_j}^\sigma 
 (\theta-\tau)^{\frac{(\gamma+\beta)\rho}{2}
  -\frac{3}{2}}
   \,\dd \theta\ \Big)^2  \,\dd \tau  \\
  &= C\int_0^{t_j} 
   \Big((\sigma-\tau)^{\gamma\rho-\frac{1}{2}
    +\frac{(\beta-\gamma)\rho}{2}} 
     - (t_j-\tau)^{\gamma\rho-\frac{1}{2}
     +\frac{(\beta-\gamma)\rho}{2}} 
      \ \Big)^2  \,\dd \tau \\
  &= C\int_0^{t_j} 
   \Big((\sigma-\tau)^{\gamma\rho}
       (\sigma-\tau)^{-\frac{1}{2}
        +\frac{(\beta-\gamma)\rho}{2}} 
    -(t_j-\tau)^{\gamma\rho}
      (t_j-\tau)^{-\frac{1}{2}
       +\frac{(\beta-\gamma)\rho}{2}} 
        \ \Big)^2  \,\dd \tau \\
  &\leq C\int_0^{t_j} 
   \Big((\sigma-\tau)^{\gamma\rho}
         (t_j-\tau)^{-\frac{1}{2}
          +\frac{(\beta-\gamma)\rho}{2}} 
    -(t_j-\tau)^{\gamma\rho}
      (t_j-\tau)^{-\frac{1}{2}
       +\frac{(\beta-\gamma)\rho}{2}} 
        \ \Big)^2  \,\dd \tau \\
  &= C\int_0^{t_j} 
   (t_j-\tau)^{-1+(\beta-\gamma)\rho}
   \Big((\sigma-\tau)^{\gamma\rho} 
    -(t_j-\tau)^{\gamma\rho}
     \ \Big)^2  \,\dd \tau \\
  &\leq C \Delta t^{2\gamma\rho}
   \int_0^{t_j} (t_j-\tau)^{-1+(\beta-\gamma)\rho}
    \,\dd \tau \\
  &= C t_j^{(\beta-\gamma)\rho}
   \Delta t^{2\gamma\rho}, 
 \end{split}
\end{equation*}
we conclude the estimate 
\begin{equation} \label{I36}
 I_{3,6}  \leq C     \Delta t^{\gamma\rho}.
\end{equation}

To estimate $I_{3,7}$, the last term in 
\eqref{I3:s}, we have
\begin{equation*}
  I_{3,7}
 \leq  \sum_{j=0}^{m-1}
    \int_{t_j}^{t_{j+1}} 
     \big\| A^{\frac{\eta}{2}}
      S(t_m-\sigma)\|_{\cL  }
       \Big\|\big\|A^{-\frac{\eta}{2}} R_{F,j}(\sigma) 
        \big\|\Big\|_{L_2(\Omega;H)}\,\dd\sigma.
 \end{equation*}
By \eqref{S:1} and \eqref{F:3}, this implies
\begin{equation*} 
  I_{3,7}
   \leq C \sum_{j=0}^{m-1}
    \int_{t_j}^{t_{j+1}}
     (t_m-\sigma)^{-\frac{\eta\rho}{2}} 
      \|u(\sigma)-u(t_j)\|_{L_{4}(\Omega;H)}^2 \,\dd\sigma,
\end{equation*}
which, considering the fact that, 
\cite[Proposition 3.2]{AnderssonKovacsLarsson}, 
\begin{equation*} 
 \|u(\sigma)-u(t_j)\|_{L_{4}(\Omega;H)}
 \leq C (\sigma-t_j)^{\frac{\gamma\rho}{2}},
\end{equation*}
we conclude the estimate 
\begin{equation} \label{I37}
  I_{3,7}   \leq C \Delta t^{\gamma\rho}.
\end{equation}

Finally, inserting \eqref{I1}--\eqref{I4} and
\eqref{I31}--\eqref{I37} into \eqref{I3:s}, we have
\begin{equation*} 
 \begin{split}
  \|u(t_m)-U_m^N\|_{L_2(\Omega;H)}
\le C 
    \big(\Delta t^{\gamma\rho}
     + \lambda_{N+1}^{-\frac{\gamma}{2}} \big) 
     + C \Delta t \sum_{j=0}^{m-1}
    \|u(t_j)-U_j\|_{L_2(\Omega;H)},
 \end{split}
\end{equation*}
which, by the discrete Gronwall lemma, completes the proof.

\end{proof}

\begin{rem} \label{Rem:3} We note that the temporal strong rate of
  convergence is (almost) twice the rate of the backward Euler method
  combined with the first order Lubich convolution quadrature used in
  \cite{AnderssonKovacsLarsson,KovacsPrintemsStrong}. In particular,
  when $Q$ is of trace class we almost recover the deterministic order
  $O(\Delta t)$ in time (c.f.\ Remark~\ref{Rem:4}).
\end{rem}

\begin{rem}\label{Rem:4}
For the deterministic form of the model problem 
\eqref{VolterraEq}, i.e., with $W=0$, the rate is 
therefore 
$O\big(\Delta t + \lambda_{N+1}^{-\gamma} \big))$, 
as expected. 
Indeed, recalling \eqref{I1} and 
Remark~\ref{DeterministicSolutionRegularity}, 
we have 
\begin{equation*} 
  I_1
\leq \|S(t_m)\|_{\cL}
   \|A^{-\gamma}(I-\cP_N) 
    A^{\gamma}u_0\| \\ 
\leq C \lambda_{N+1}^{-\gamma} \|u_0\|_{2\gamma}.
\end{equation*}
We also recall \eqref{I2}, for which we have, in this case,
\begin{equation*}
 I_2\leq C \lambda_{N+1}^{-\gamma}(1+\|u_0\|). 
\end{equation*}
\end{rem}

\begin{rem}\label{Rem:5}
  To avoid the restrictive assumption that one has access to the
  eigenvalues and eigenfunctions of $A$, in theory, one may discretize
  equation \eqref{VolterraEq} in space by other means, such as the
  finite element method. Indeed, when $A$ is minus the Dirichlet
  Laplacian in $L_2(\cD)$, then one has nonsmooth data error estimates
  for the finite element method, at least for the main example
  \eqref{Volterrafrac}, see \cite{LST}, and the error analysis in the
  present paper can be performed with a slight increase in
  technicality using these nonsmooth data estimates. However, the
  corresponding algorithm would be difficult to implement in
  practice. Indeed, if $S_h$ denotes the finite element approximation
  of $S$, where $h$ is the finite element mesh-size, one would have to
  simulate a Gaussian random variable with covariance operator
\begin{equation*}
\int_0^{t_n}S_h(t)P_hQP_hS_h(t)\, \dd t  
\end{equation*}
which, even in the simplest case $Q=I$, is not practically feasible
unless one has access to the eigenvalues and eigenfunctions of the
discrete Laplacian $A_h$. Therefore, we have chosen to analyze the
spectral Galerkin method for which the scheme is easily implementable, but
even in this case, this is only true when $A$ and $Q$ commute.
\end{rem}

\begin{rem} \label{Rem:6} The feasibility of the proposed numerical
  scheme relies heavily on whether one knows the scalar functions
  $s_k$ from \eqref{sk}. This is the case for the Riesz kernel, see
  Section~\ref{Sec:Numeric}, or for the tempered Riesz kernel, but
  in general this leads to the additional difficulty of solving
  \eqref{sk}, for example, by numerically inverting a Laplace
  transform.
\end{rem}

\section{Numerical implementation}
 \label{Sec:Numeric}
 In this section we present the explicit form of the solution of
 \eqref{sk} in terms of the Mittag-Leffler functions.  Then,
 we illustrate the temporal strong order of convergence, to confirm
 the proposed rate in Theorem~\ref{thm:main}.

\subsection{Explicit representation of the solution}
First, we derive an explicit representation of the 
resolvent family in terms of the Mittag-Leffler functions 
when $b$ is the Riesz kernel.

Recall that the one parameter Mittag-Leffler function 
$E_{a}(z)$, $a>0$, 
is defined as
\begin{equation*}
 E_{a}(z)=\sum_{k=0}^\infty
  \frac{z^k}{\Gamma(a k+1)}, 
  \quad  z\in \mathbb{C},
\end{equation*}
where obviously $E_{1}(z)=\exp(z)$. 
By taking the Laplace transform of 
\eqref{sk}, when 
 $b(t)=t^{\rho-2}/\Gamma(\rho-1)$, we have
 $\widehat{b}(z)=z^{1-\rho}$ and 
\begin{equation*}
  \widehat{s_k}(z)= \frac{1}{z+\lambda_kz^{1-\rho}}
  =\frac{z^{\rho-1}}{z^{\rho}+\lambda_k},
\end{equation*}
which implies 
\begin{equation*}
 s_k(t)=  E_{\rho}(-\lambda_k t^\rho) .
 \end{equation*}
Thus, the resolvent family is given by 
\begin{align*}
S(t)v=\sum_{k=1}^\infty E_{\rho}(-\lambda_k t^\rho )  
(v,\vp_k)\vp_k  .
\end{align*}

To explain the computer implementation of the 
fully-discrete method \eqref{UmN}, we note that 
\begin{equation*}
 S_N(t_m)= \sum_{k=1}^N E_{\rho}(-t_m^\rho \lambda_k)  
 (v,\vp_k)\vp_k,  
\end{equation*}
Suppose that $Q$ has the same eigenfunctions as $A$, so that
$Qv=\sum_{k=1}^\infty \mu_k (v,\vp_k)\vp_k$. 
Then, for each time step $m=1,\dots,M$, 
the approximation $U^N_m$ defined by \eqref{UmN} 
is given by $U^N_m=\sum_{k=1}^NU_{m,k}^N\vp_k$, where
for $k=1,\dots,N$, 
\begin{equation} \label{UmN1}
 \begin{split}
   U_{m,k}^N=E_{\rho}(-\lambda_k t_m^\rho)u_{0,k} 
   &+ \sum_{j=0}^{m-1}
   \int_{t_{j}}^{t_{j+1}} 
    E_{\rho}(-\lambda_k (t_m-\sigma)^\rho)
     \,\dd \sigma \ F_k(U_j^N) \\
  &+\int_{0}^{t_m}
   E_{\rho}(-\lambda_k (t_m-\sigma)^\rho)\mu_k^{\frac12}
    \,\dd \beta_k(\sigma) 
 \end{split}
\end{equation}
and where $u_{0,k}=(u_0,\vp_k),\ F_k(\cdot)=(F(\cdot),\vp_k)$, and
$\beta_k$, $k=1,\dots,N$, are mutually independent standard Brownian
motions.

We note that the integrals of the Mittag-Leffler functions are
computable, e.g., by means of a simple quadrature, say the trapezoidal
method.  For evaluating the Mittag-Leffler function we use
\texttt{mlf.m} from \cite{PodlubnyKacenak}. What one has to be careful
with is how to simulate, for fixed $k$, the random variables
$$
\mathcal{O}(t_m):=\int_{0}^{t_m}
   E_{\rho}(-\lambda_k (t_m-\sigma)^\rho)
   \mu_k^{\frac12}\, \dd \beta_k(\sigma), \quad m=1,...,M.
$$
Observe that the $\mathbb{R}^M$-valued random variable
$$
\mathcal{N}:=(\mathcal{O}(t_1),\dots,\mathcal{O}(t_M))
$$
is a $0$-mean Gaussian random variable with covariance matrix 
$$
(M)_{i,j}=\int_0^{\min(t_i,t_j)}
E_{\rho}(-\lambda_k (t_i-\sigma)^\rho)
E_{\rho}(-\lambda_k (t_j-\sigma)^\rho)\mu_k\,\dd\sigma .
$$
Thus $\mathcal{N}=L\xi$, where $LL^T=M$, and $\xi$ is an $\mathbb{R}^M$-valued
random variable with independent standard Gaussian coordinates. This
difficulty does not arise in the memoryless case as there one can
exploit the semigroup property of the solution operator. In that case
one only has to simulate the independent Gaussian random variables
$\xi_i:=\int_{t_{i-1}}^{t_i}\exp(-\lambda_k(t_i-\sigma))\mu_k^{\frac12}\,\dd
\beta_k(\sigma)$, $i=1,...,M$, and then take
$\mathcal{O}(t_m)=\sum_{i=1}^m \exp(-\lambda_k(t_m-t_i))\xi_i$,
$m=1,...,M$.

\subsection{Numerical experiments}
Since the main contribution of this paper is the temporal
approximation, we only present a simplified numerical 
experiment with uncoupled eigenmodes.  
More precisely, let 
$u=\sum_{k=1}^{\infty}u_k\vp_k$
and define the nonlinear operator 
\begin{align} \label{eq:sum}
  F(u)=\sum_{k=1}^{\infty}f(u_k)\vp_k.  
\end{align}
We simulate various coordinates of the numerical approximation; that
is, we simulate the random variables $U_{M,k}^N$ in \eqref{UmN1} for
various values of $k$, with $\mu_k=1$ and $\lambda_k=k^2\pi^2$ being
the eigenvalues of the Dirichlet Laplacian in 1-D on
$\mathcal{D}=[0,1]$.

Since we are simulating scalar problems, the noise is trace class and
we expect the rate of convergence of the MLEI to be almost 1 according
to the theory. We also compare the performance of the MLEI to the
backward Euler based convolution quadrature (BE) proposed and analyzed
in \cite{KovacsPrintemsStrong,KovacsPrintemsWeak} in the linear case
and in \cite{AnderssonKovacsLarsson} in the semilinear setting. For BE
the theory in \cite{AnderssonKovacsLarsson} predicts a strong rate of
almost 0.5 for trace class noise, albeit the conditions on $f$ there
are somewhat different to the present setting and hence the rate could
be higher for smooth additive noise. In fact, in the scalar case we
have $\|A^{t}Q^{1/2}\|_{\HS}=\lambda_k^t<\infty$ for any $t$ so that we have smoothness of any order. But when
$\lambda_k $ is large, this quantity is also large, making the error constant large, and then we do not expect
to see a higher  rate than corresponding to $t=0$;
that is, the trace class noise case (this is seen in Figure~\ref{fig2}). This
also explains why, for smaller $\lambda_k $, we might occasionally
observe a better rate than 0.5 for BE (see Figure~\ref{fig1} with
$\lambda_2$). Nevertheless, in all experiments the MLEI outperforms BE
by far and we experimentally see rate 1 for MLEI in all 
experiments.

We use $100$ sample paths in all experiments.  
The computed solution is compared to a reference solution with 
much smaller mesh size.  
We use the functions $f(u)=\sin(u)$ and $f(u)=5(1-u)/(1+u^2)$ 
and different values $\rho=1.2$ and $\rho=1.75$ 
(and for Figure~\ref{fig3}
also $\rho=1.5$); the first being closer to the heat equation 
with solutions dying out quickly (Figure~\ref{fig4}), while 
the latter producing more pronounced wave phenomenon 
(Figure~\ref{fig5}).
For functions with larger Lipschitz constant, 
such as, for example, $f(u)=100(1-u)/(1+u^2)$, we still get similar convergence behaviour as shown in Figure~\ref{fig3}.

The captions in and underneath the figures explain the settings for the
various experiments.

\begin{figure}[htbp]  
 \begin{center}
  \includegraphics[width=6.25cm,height=4.45cm]
    {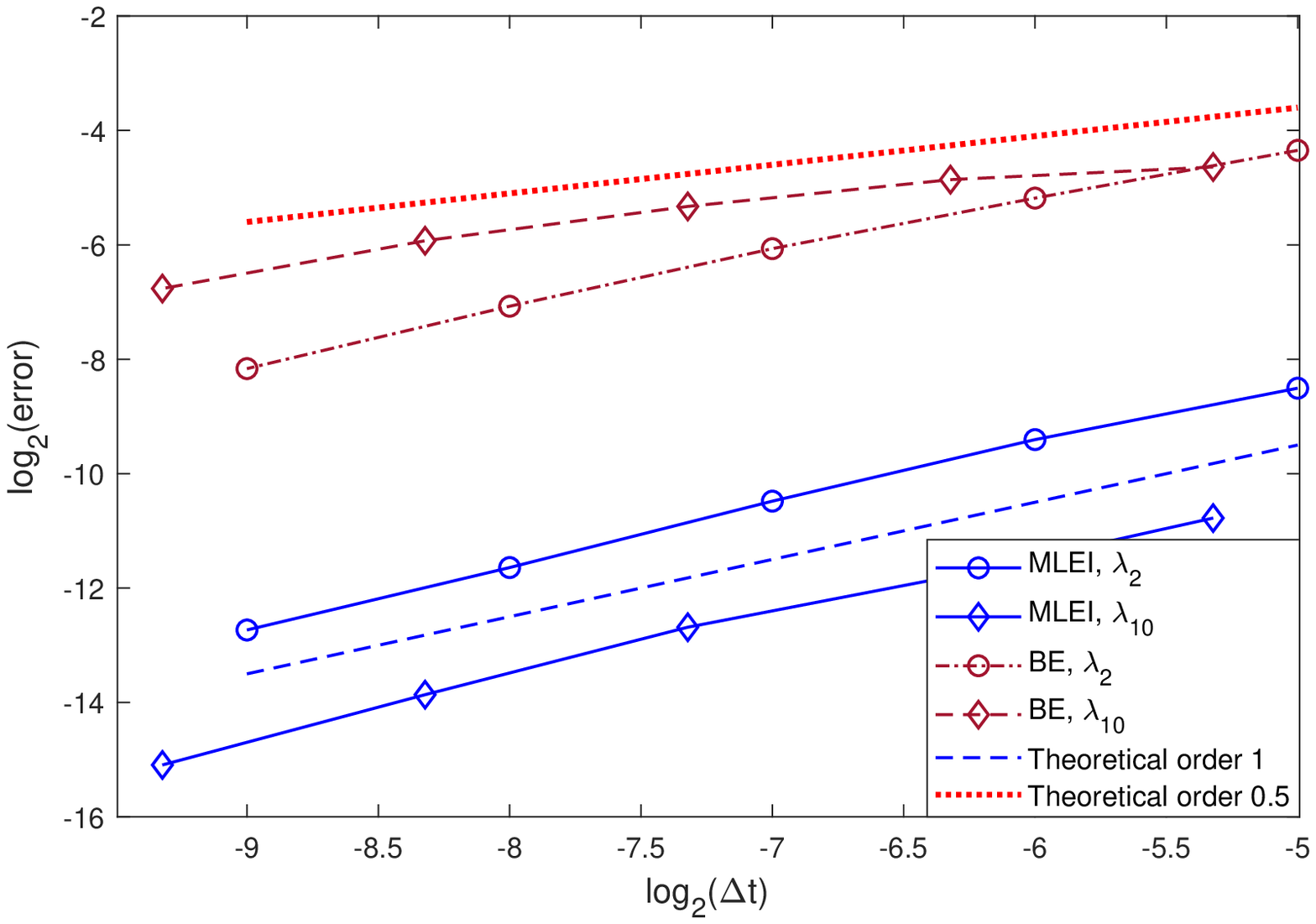}
  \includegraphics[width=6.25cm,height=4.45cm]
    {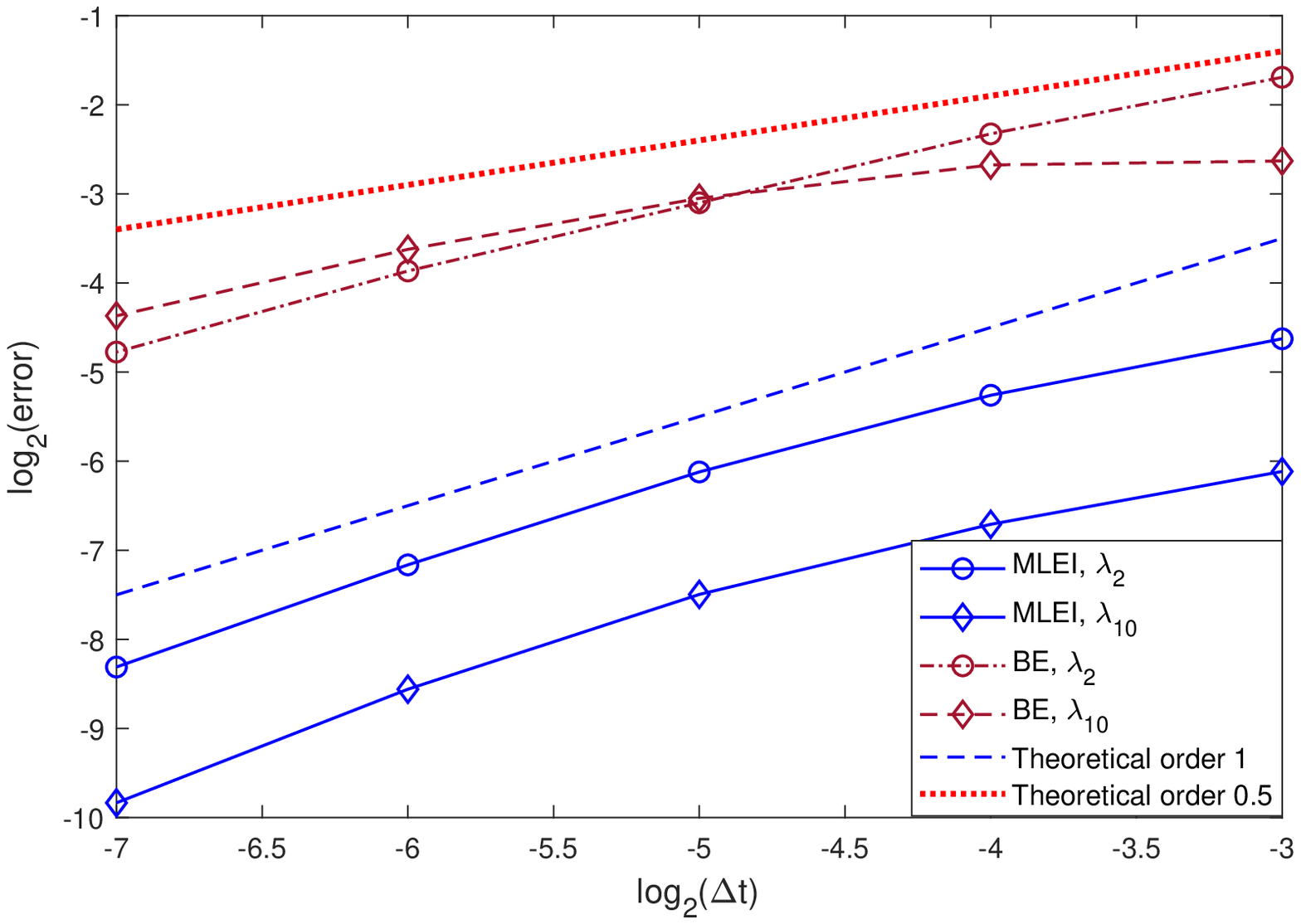}
 \caption{Comparison of BE and MLEI  temporal rate of convergence with
   $f(u)=\sin(u)$ and $\lambda_2=4\pi^2 $, respectively
   $\lambda_{10}=100\pi^2 $. 
   Left: with $\rho=1.2$. Right: $\rho=1.75$. }
  \label{fig1}
 \end{center}
\end{figure}

\begin{figure}[htbp] 
 \begin{center}
  \includegraphics[width=10.0cm,height=5.5cm]
    {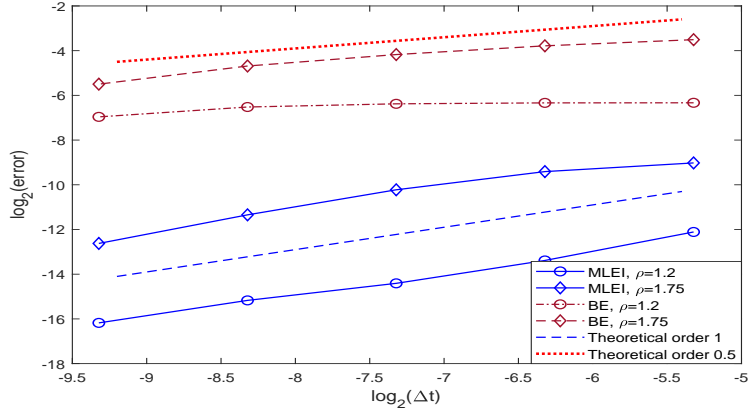}
  \caption{Comparison of BE and MLEI  temporal rate of convergence
    with $f(u)=\sin(u)$, $\rho=1.2$ and $\rho=1.75$ and
    $\lambda_{30}=900\pi^2 $.} 
 \label{fig2}
 \end{center}
\end{figure}

\begin{figure}[htbp] 
 \begin{center}
  \includegraphics[width=10.0cm,height=5.5cm]
    {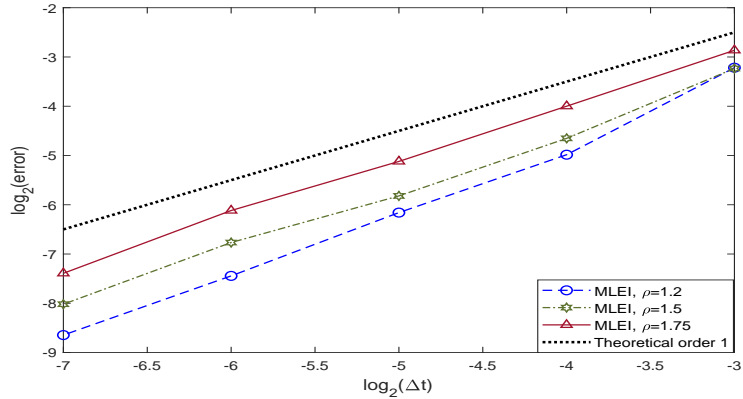}
  \caption{Temporal rate of convergence for MLEI  
  with $f(u)= 5(1-u)/(1+u^2) $, $\rho=1.2,1.5$ and $1.75$ and $\lambda_2=4\pi^2 $.}
 \label{fig3}
 \end{center}
\end{figure}

\begin{figure}[htbp]  
 \begin{center}
  \includegraphics[width=11.5cm,height=5.0cm]
    {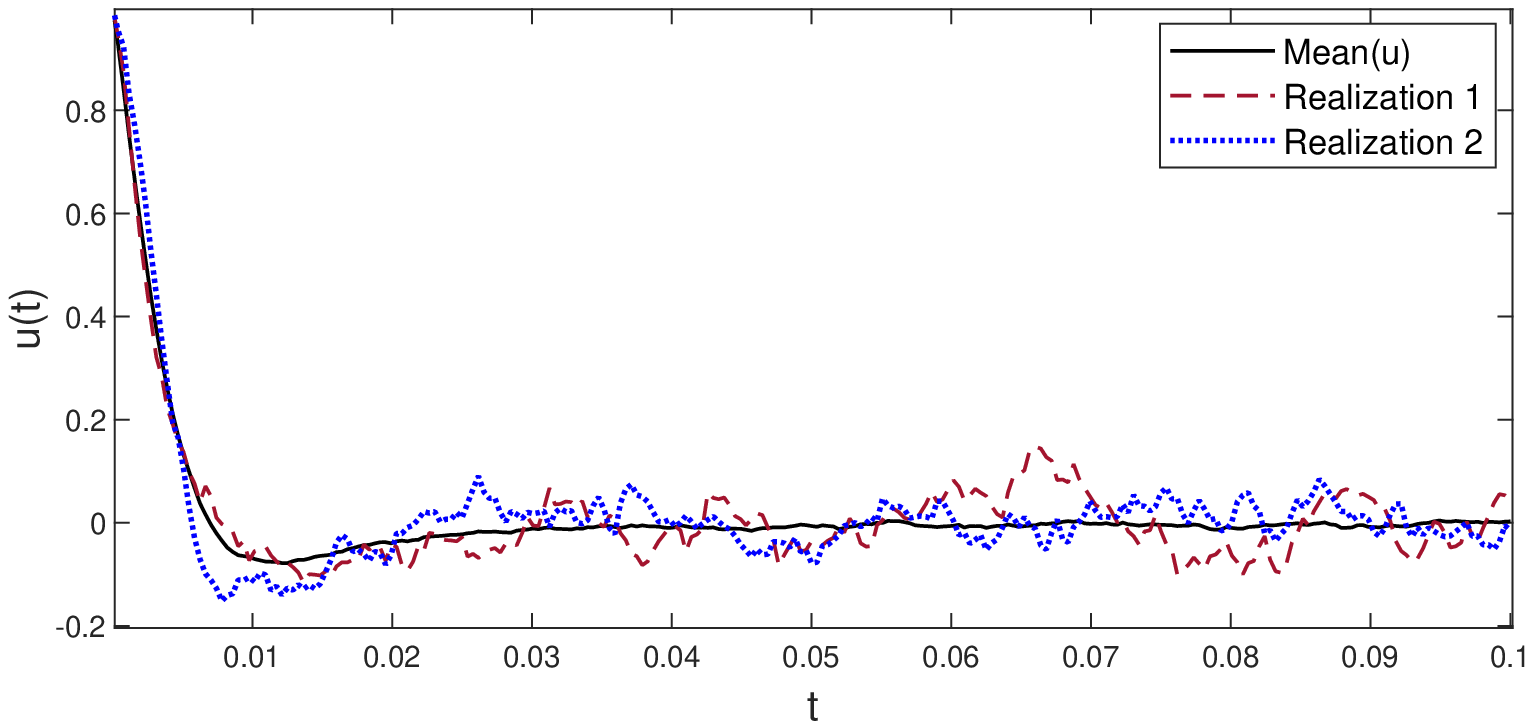}
    \caption{Solution behaviour with: $f(u)=\sin(u)$, $\rho=1.2$ and 
     $\lambda_{10}=100\pi^2 $.}
    \label{fig4}
 \end{center}
\end{figure}

\begin{figure}[htbp]  
 \begin{center}
  \includegraphics[width=11.5cm,height=5.0cm]
    {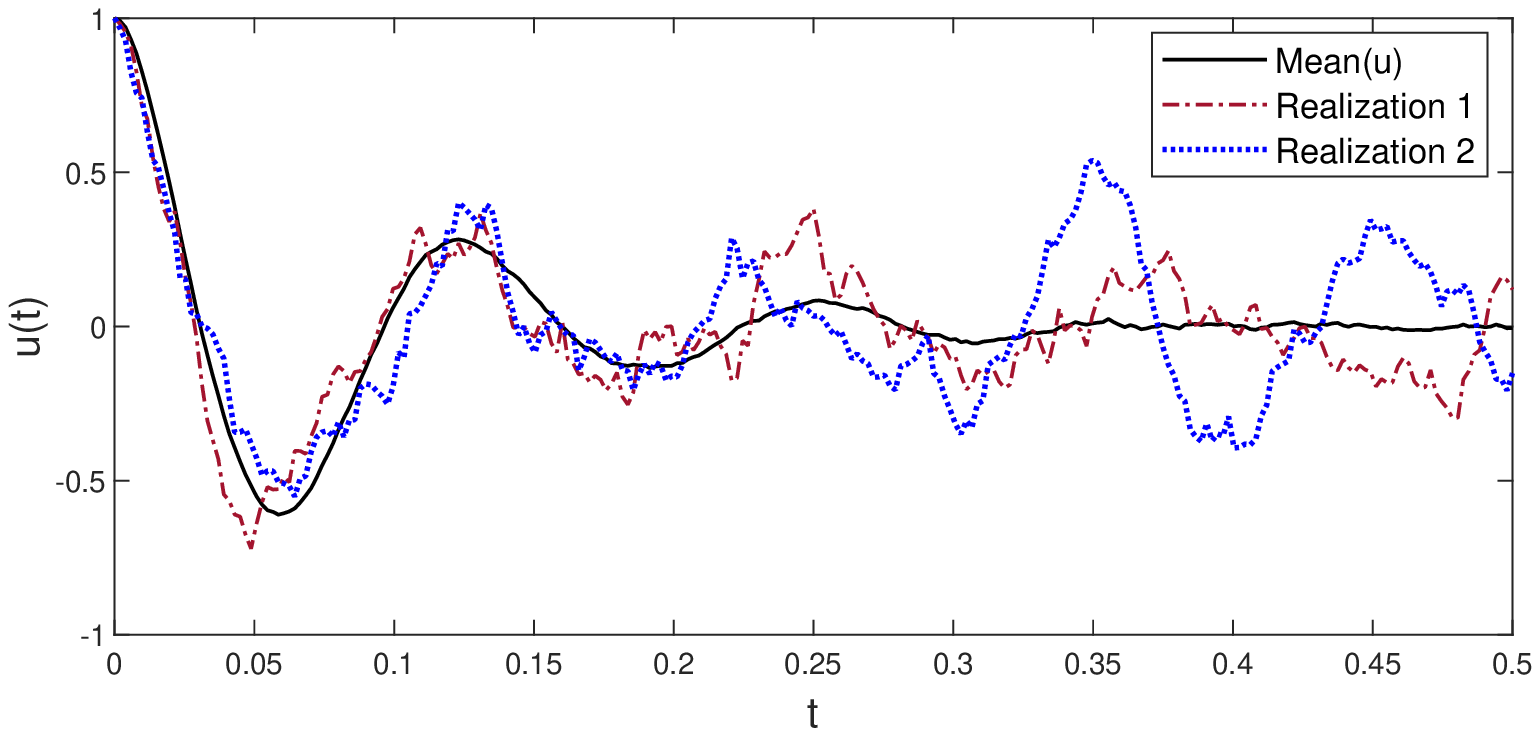}
  \caption{Solution behaviour with: $f(u)=\sin(u)$, $\rho=1.75$ and 
    $\lambda_{10}=100\pi^2 $.}
  \label{fig5}
 \end{center}
\end{figure}

\textbf{Acknowledgment.} 
We would like thank the anonymous referees for
constructive comments that helped us improve the manuscript.


\end{document}